\def\co{\tiny{\textcircled{\tiny\#}}}
\date{\empty}
\numberwithin{equation}{section} \theoremstyle{plain}
\newtheorem*{thm*}{Main Theorem}
\newtheorem{theorem}{Theorem}[section]
\newtheorem{corollary}[theorem]{Corollary}
\newtheorem*{corollary*}{Corollary}
\newtheorem*{claim*}{Claim}
\newtheorem{lemma}[theorem]{Lemma}
\newtheorem*{lemma*}{Lemma}
\newtheorem*{proposition*}{Proposition}
\newtheorem*{remark*}{Remark}
\newtheorem{example}[theorem]{Example}
\newtheorem*{example*}{Example}
\newtheorem*{question*}{Question}
\newtheorem*{definition*}{Definition}
\newtheorem*{acknowledgements*}{ACKNOWLEDGEMENTS}
\begin{document}
\begin{center}
{\large \bf  Characterizations of weighted core inverse in rings with involution}

\vspace{0.4cm} {\small \bf Tingting Li}
\footnote{ Tingting Li (Corresponding author E-mail: littnanjing@163.com): School of Mathematical Sciences, Yangzhou University, Yangzhou, 225002, China.}

\end{center}

 \bigskip
 { \bf  Abstract:}  \leftskip0truemm\rightskip0truemm
$R$ is a unital ring with involution.
We investigate the characterizations and representations of weighted core inverse of an element in $R$ by idempotents and units.
For example,
let $a\in R$ and $e\in R$ be an invertible Hermitian element,
$n\geqslant 1$,
then $a$ is $e$-core invertible if and only if there exists an element (or an idempotent) $p$ such that $(ep)^{\ast}=ep$,
$pa=0$ and $a^{n}+p$ (or $a^{n}(1-p)+p$) is invertible.
As a consequence,
let $e, f\in R$ be two invertible Hermitian elements,
then $a$ is weighted-$\mathrm{EP}$ with respect to $(e, f)$ if and only if there exists an element (or an idempotent) $p$ such that $(ep)^{\ast}=ep$,
$(fp)^{\ast}=fp$,
$pa=ap=0$ and $a^{n}+p$ (or $a^{n}(1-p)+p$) is invertible.
These results generalize and improve conclusions in \cite{Li}.

{ \textbf{Key words:} Weighted core inverse, weighted dual core inverse, weighted EP.}

{ \textbf{AMS subject classifications:} 15A09, 16B99, 46L05.}
 \bigskip
%16E50 von Neumann regular rings and generalizations
%%%%%%%%%%%%%%%%%%%%%%%%%%%%%%%%%%%%%%%%%%%%%%%%%%%%%%%%%%%%%%%%%%%%%%%%%%%%%%%%%%%%%%%%%%%%%%%%%%%%%%%%%%%%%%%%%%%%%%%%%%%%%%%%%%%%%%%%%%%%%%%%%%%%%%%%%%%%%%%%%%%%%%%%%%%%%%%%%%%%
%%%%%%%%%%%%%%%%%%%%%%%%%%%%%%%%%%%%%%%%%%%%%%%%%%%%%%%%%%%%%%%%%%%%%%%%%%%%%%%%%%%%%%%%%%%%%%%%%%%%%%%%%%%%%%%%%%%%%%%%%%%%%%%%%%%%%%%%%%%%%%%%%%%%%%%%%%%%%%%%%%%%%%%%%%%%%%%%%%%%
%%%%%%%%%%%%%%%%%%%%%%%%%%%%%%%%%%%%%%%%%%%%%%%%%%%%%%%%%%%%%%%%%%%%%%%%%%%%%%%%%%%%%%%%%%%%%%%%%%%%%%%%%%%%%%%%%%%%%%%%%%%%%%%%%%%%%%%%%%%%%%%%%%%%%%%%%%%%%%%%%%%%%%%%%%%%%%%%%%%%
%%%%%%%%%%%%%%%%%%%%%%%%%%%%%%%%%%%%%%%%%%%%%%%%%%%%%%%%%%%%%%%%%%%%%%%%%%%%%%%%%%%%%%%%%%%%%%%%%%%%%%%%%%%%%%%%%%%%%%%%%%%%%%%%%%%%%%%%%%%%%%%%%%%%%%%%%%%%%%%%%%%%%%%%%%%%%%%%%%%

\section{ \bf Introduction}
Let $R$ be a unital ring.
An involution $\ast$ in $R$ is an anti-isomorphism of degree 2 in $R$,
that is to say,
$(a^{\ast})^{\ast}=a, (a+b)^{\ast}=a^{\ast}+b^{\ast}$ and $(ab)^{\ast}=b^{\ast}a^{\ast}$ for all $a, b\in R$.
An element $a\in R$ is Hermitian if and only if $a^{\ast}=a$.
And $a\in R$ is called an idempotent if $a^{2}=a$.
A Hermitian idempotent is said to be a projection.

Let $e,f\in R$ be two invertible Hermitian elements.
We say that $a\in R$ has a weighted Moore-Penrose inverse with weights $e, f$ if there exists an $x$ satisfying the following four equations (see for example, \cite{KPS2, Wei, Mosic}):
\begin{equation*}
\begin{split}
    (1)~axa=a,~(2)~xax=x,~(3e)~(eax)^{\ast}=ax,~(4f)~(fxa)^{\ast}=xa.
\end{split}
\end{equation*}
If such an $x$ exists,
then it is called a weighted Moore-Penrose inverse of $a$,
it is unique and denoted by $a^{\dagger}_{e, f}$.
If $e=f=1$,
then $a^{\dagger}_{e, f}=a^{\dagger}$ is the ordinary Moore-Penrose inverse of $a$.
The sets of all Moore-Penrose invertible elements and weighted Moore-Penrose invertible elements with weights $e, f$ are denoted by the symbols $R^{\dagger}$ and $R^{\dagger}_{e, f}$,
respectively.

If $x$ satisfies the equations $(1)$ and $(3e)$,
then $x$ is called a $\{1,3e\}$-inverse of $a$ and denoted by $a^{(1,3e)}$,
and a $\{1,4f\}$-inverse of $a$ can be similarly defined.
The symbols $a\{1,3e\}$ and $a\{1,4f\}$ denote the sets of all $\{1,3e\}$-inverses and $\{1,4f\}$-inverses of $a$,
respectively.
An element $a\in R$ is group invertible if there is an $x\in R$ satifying
$$(1)~axa=a,~~~(2)~xax=x,~~~(5)~ax=xa.$$
Such an $x$ is the group inverse of $a$,
and it is unique if it exists and denoted by $a^{\#}$.
The sets of all group invertible,
invertible,
$\{1,3e\}$-invertible and $\{1,4f\}$-invertible elements in $R$ are denoted by the symbols $R^{\#}$,
$R^{-1}$,
$R^{(1,3e)}$ and $R^{(1,4f)}$,
respectively.

Mosi\'{c} et al. \cite{Mosic2} give introduced the definitions of the $e$-core inverse and the $f$-dual core inverse,
which extended the concepts of the core inverse and the dual core inverse (see \cite{OM, DSR, XSZ}).
The element $a\in R$ has a weighted core inverse with weight $e$ (or $e$-core inverse),
if there exists $x\in R$ such that
$$axa = a, ~~~xR = aR, ~~~Rx = Ra^{\ast}e.$$
If such an $x$ exists,
then it is called a $e$-core inverse of $a$,
it is unique and denoted by $a^{e, \co}$.
The element $a\in R$ has a weighted dual core inverse with weight $f$ (or $f$-dual core inverse),
if there exists $x\in R$ such that
$$axa = a, ~~~fxR = a^{\ast}R, ~~~Rx = Ra.$$
If such an $x$ exists,
then it is called a $f$-core inverse of $a$,
it is unique and denoted by $a_{f, \co}$.
If $e=1$ (resp., $f=1$),
then $a^{e, \co}=a^{\co}$ (resp., $a_{f, \co}=a_{\co}$) is the ordinary core inverse (resp., dual core inverse) of $a$.
The sets of all $e$-core invertible elements and $f$-dual core invertible elements in $R$ are denoted by the symbols $R^{e, \co}$ and $R_{f, \co}$,
respectively.
In addition,
they defined that an element $a\in R$ is weighted-$\mathrm{EP}$ with respect to $(e, f)$ if $a\in R^{\#}\cap R^{\dagger}_{e, f}$ and $a^{\#}=a^{\dagger}_{e, f}$.
When $e=f=1$,
it is the ordinary EP element.

In this paper,
we investigate the characterizations of $e$-core inverse,
$f$-dual core inverse,
right $e$-core inverse and weighted-$\mathrm{EP}$ with respect to $(e, f)$ in a ring with involution.
These results will generalize and improve conclusions in \cite{Li}.

We will also use the following notations:
$aR=\{ax ~|~ x\in R\}$, $Ra=\{xa ~|~ x\in R\}$, $^{\circ}\!a=\{x\in R ~|~ xa=0\}$, $a^{\circ}=\{x\in R ~|~ ax=0\}$.

Some auxiliary lemmas and results are presented for the further reference.

\begin{lemma} \cite[Proposition $7$]{H}\label{group-inverse}
Let $a\in R$.
$a\in R^{\#}$ if and only if $a=a^{2}x=ya^{2}$ for some $x, y\in R$.
In this case, $a^{\#}=yax=y^{2}a=ax^{2}$.
\end{lemma}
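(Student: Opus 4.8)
The plan is to prove both implications directly from the defining equations of the group inverse, namely $axa=a$, $xax=x$ and $ax=xa$, and then to use the uniqueness of $a^{\#}$ (noted in the excerpt) to identify the three representations. For the necessity direction, suppose $a\in R^{\#}$ and put $x=y=a^{\#}$. From $aa^{\#}a=a$ together with the commutativity $aa^{\#}=a^{\#}a$ I obtain $a=a\cdot a^{\#}a=a^{2}a^{\#}$ and $a=aa^{\#}\cdot a=a^{\#}a^{2}$, so indeed $a=a^{2}x=ya^{2}$ for this choice of $x,y$.

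For sufficiency, assume $a=a^{2}x=ya^{2}$. The key observation, which I expect to carry the entire argument, is obtained by right-multiplying $a=ya^{2}$ by $x$: this yields $ax=ya^{2}x=y(a^{2}x)=ya$, since $a^{2}x=a$. Hence $ax=ya$, and I set $p:=ax=ya$. Two routine consequences follow: $ap=a^{2}x=a$ and $pa=ya^{2}=a$, so $p$ is a two-sided identity for $a$; and $p^{2}=(ya)(ax)=ya^{2}x=ya=p$, so $p$ is idempotent. From $ax=p$ and $pa=a$ I also record $axa=pa=a$.

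It then remains to verify that $z:=yax$ is the group inverse. For the outer equation, writing $yax=(ya)x=px$ gives $aza=a(px)a=(ap)(xa)=a(xa)=axa=a$. For $zaz=z$, I use $axa=a$ to get $zaz=y(axa)(yax)=ya(yax)=p(yax)=p(px)=p^{2}x=px=z$. For commutativity, $az=a(px)=(ap)x=ax=p$ while $za=y(axa)=ya=p$, so $az=za=p$. Thus $a\in R^{\#}$ with $a^{\#}=yax$. Finally, because $ya=ax=p$, both $y^{2}a=y(ya)=yp$ and $ax^{2}=(ax)x=px$ equal $yax$ (which is $yp=px$), so uniqueness of the group inverse yields $a^{\#}=yax=y^{2}a=ax^{2}$.

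The main obstacle, and the only genuinely nonobvious point, is the identity $ax=ya$; once $p=ax=ya$ is shown to be an idempotent two-sided identity for $a$, the remaining verifications and the coincidence of the three formulas are straightforward bookkeeping.
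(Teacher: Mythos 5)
Your proof is correct: the identity $ax=ya$ (obtained by multiplying $a=ya^{2}$ on the right by $x$ and using $a^{2}x=a$), the resulting idempotent $p=ax=ya$ acting as a two-sided identity for $a$, and the verification that $z=yax$ satisfies $aza=a$, $zaz=z$, $az=za$ all check out, as does the identification $yax=y^{2}a=ax^{2}$. Note that the paper itself gives no proof of this lemma --- it is quoted from Hartwig \cite[Proposition 7]{H} --- and your argument is essentially the standard one from that source, constructing $a^{\#}$ as $yax$; the only cosmetic remark is that you do not actually need uniqueness of the group inverse for the final step, since $yax=(ya)x=px=ax^{2}$ and $yax=y(ax)=y(ya)=y^{2}a$ hold as literal identities.
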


\begin{lemma} \cite{BAB, DSC}\label{J.L}
Let $a, b\in R$.
$1+ab$ is invertible if and only if $1+ba$ is invertible.
\end{lemma}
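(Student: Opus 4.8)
The plan is to prove one implication by exhibiting an explicit inverse and then obtain the other by symmetry. First I would assume $1+ab$ is invertible and set $u=(1+ab)^{-1}$, so that $(1+ab)u=u(1+ab)=1$. The one genuinely creative step is to guess a closed form for the inverse of $1+ba$; the natural candidate is
$$v = 1-bua = 1-b(1+ab)^{-1}a.$$

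The heart of the argument is then a direct check that $v$ is a two-sided inverse of $1+ba$. For the right-hand product I would expand
$$(1+ba)v = (1+ba)(1-bua) = 1+ba-bua-babua = 1+ba-b(u+abu)a,$$
and use $u+abu=(1+ab)u=1$ to collapse the last term to $ba$, leaving $(1+ba)v=1$. The left-hand product is handled symmetrically: expanding
$$v(1+ba) = (1-bua)(1+ba) = 1+ba-bua-buaba = 1+ba-bu(a+aba),$$
and using the factorization $a+aba=(1+ab)a$ together with $u(1+ab)=1$ reduces the middle term to $ba$ as well, so $v(1+ba)=1$. Hence $1+ba$ is invertible with inverse $1-b(1+ab)^{-1}a$.

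For the converse I would simply interchange the roles of $a$ and $b$: if $1+ba$ is invertible, the same formula with $a$ and $b$ swapped produces an inverse $1-a(1+ba)^{-1}b$ for $1+ab$. Thus the two invertibility statements are equivalent.

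I do not expect any real obstacle, as this is the classical Jacobson lemma; once the explicit inverse $1-b(1+ab)^{-1}a$ is written down, the remainder is a routine expansion in which the only relations used are $(1+ab)u=u(1+ab)=1$ together with the elementary factorizations $u+abu=(1+ab)u$ and $a+aba=(1+ab)a$. Carrying the inverse symbolically as $u$ rather than substituting it everywhere keeps the cancellations transparent, and the perfect symmetry of the candidate formula in $a$ and $b$ is what makes the converse free.
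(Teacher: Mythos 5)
Your proof is correct: the expansion $(1+ba)(1-b(1+ab)^{-1}a)=1$ and its mirror image both check out, and the symmetry argument for the converse is valid. Note that the paper itself gives no proof of this lemma --- it is quoted from the literature (Barnes; Cvetkovi\'{c}-Ili\'{c} and Harte) --- and the explicit-inverse formula $1-b(1+ab)^{-1}a$ you guessed is precisely the classical Jacobson-lemma argument those sources rely on, so your write-up supplies exactly the intended justification.
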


\begin{lemma} \cite[Proposition $2.1$ and $2.2$]{HHZ}\label{13e-14f-inverse}
Let $a\in R$ and let $e\in R$ be an invertible Hermitian element.
We have the following results:\\
$(i)$ $a$ is $\{1,3e\}$-invertible if and only if $a\in Ra^{\ast}ea$.
Moreover,
if $a=xa^{\ast}ea$ for some $x\in R$,
then $x^{\ast}e\in a\{1, 3e\}$;\\
$(ii)$ $a$ is $\{1,4f\}$-invertible if and only if $a\in af^{\-1}a^{\ast}R$.
Moreover,
if $a=af^{\-1}a^{\ast}y$ for some $y\in R$,
then $f^{-1}y^{\ast}\in a\{1, 4f\}$.
\end{lemma}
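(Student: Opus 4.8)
The plan is to prove each of $(i)$ and $(ii)$ by establishing the two implications of the equivalence simultaneously with the explicit formula in the ``Moreover'' clause, and to obtain $(ii)$ as the adjoint-symmetric counterpart of $(i)$. Throughout I read $(3e)$ as the requirement that $eax$ be Hermitian and $(4f)$ as the requirement that $fxa$ be Hermitian, and I use freely that $e^{\ast}=e$ and $f^{\ast}=f$ together with their invertibility.

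For $(i)$, necessity is the easy direction: if $y\in a\{1,3e\}$ then $aya=a$ and $(eay)^{\ast}=eay$, so $eay=y^{\ast}a^{\ast}e$ and hence $ay=e^{-1}y^{\ast}a^{\ast}e$; substituting into $a=(ay)a$ yields $a=e^{-1}y^{\ast}a^{\ast}ea\in Ra^{\ast}ea$. For sufficiency, which also delivers the ``Moreover'' formula, I begin from $a=xa^{\ast}ea$ and claim $y:=x^{\ast}e$ works. The decisive manoeuvre --- and what I expect to be the main obstacle --- is to squeeze a symmetry out of the single factorization hypothesis: taking adjoints gives $a^{\ast}=a^{\ast}eax^{\ast}$, and after left-multiplying by $x$ and substituting $xa^{\ast}ea=a$ on the right I obtain the key identity $xa^{\ast}=ax^{\ast}$, i.e. $ax^{\ast}$ is Hermitian. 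With this identity the two verifications collapse to one line each: $a(x^{\ast}e)a=(ax^{\ast})ea=(xa^{\ast})ea=a$ settles $(1)$, while $ea(x^{\ast}e)=eax^{\ast}e$ has adjoint $exa^{\ast}e=eax^{\ast}e$ by the identity, so $ea(x^{\ast}e)$ is Hermitian, which settles $(3e)$. Hence $x^{\ast}e\in a\{1,3e\}$.

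For $(ii)$ I would mirror this under the involution. Necessity: from $aya=a$ and $(fya)^{\ast}=fya$ one gets $ya=f^{-1}a^{\ast}y^{\ast}f$ and therefore $a=af^{-1}a^{\ast}(y^{\ast}f)\in af^{-1}a^{\ast}R$. Sufficiency together with the formula: starting from $a=af^{-1}a^{\ast}y$, taking adjoints gives $a^{\ast}=y^{\ast}af^{-1}a^{\ast}$, and right-multiplying by $y$ and substituting $af^{-1}a^{\ast}y=a$ produces the mirror identity $a^{\ast}y=y^{\ast}a$; then $a(f^{-1}y^{\ast})a=af^{-1}(y^{\ast}a)=af^{-1}a^{\ast}y=a$ gives $(1)$ and $f(f^{-1}y^{\ast})a=y^{\ast}a=a^{\ast}y=(y^{\ast}a)^{\ast}$ gives the Hermitian condition $(4f)$, so $f^{-1}y^{\ast}\in a\{1,4f\}$. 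The whole argument hinges on the two adjoint-and-substitute identities $xa^{\ast}=ax^{\ast}$ and $a^{\ast}y=y^{\ast}a$; once they are found, every one of the defining equations reduces to a direct substitution, so the real content of the lemma lies in locating these symmetries.
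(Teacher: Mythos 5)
Your proof is correct. Note that the paper itself offers no proof of this lemma --- it is quoted verbatim from the cited reference \cite[Propositions 2.1 and 2.2]{HHZ} --- so there is no internal argument to compare against; what you have reconstructed is precisely the standard argument for results of this type (going back to the classical $\{1,3\}$-inverse case): take adjoints of the factorization $a=xa^{\ast}ea$, multiply back, and substitute to extract the symmetry identity $xa^{\ast}=ax^{\ast}$ (resp.\ $a^{\ast}y=y^{\ast}a$), after which conditions $(1)$ and $(3e)$ (resp.\ $(4f)$) follow by direct substitution. Both your necessity and sufficiency directions, and the explicit formulas $x^{\ast}e\in a\{1,3e\}$ and $f^{-1}y^{\ast}\in a\{1,4f\}$, check out exactly as the lemma asserts.
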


\begin{lemma} \cite[Theorem $2.1$ and $2.2$]{Mosic2}\label{e-core-inverse}
Let $a\in R$ and let $e, f\in R$ be two invertible Hermitian elements.
Then \\
$(I)$ The following statements are equivalent:\\
\indent $(i)$ $a\in R^{e, \co}$;\\
\indent $(ii)$ there exists $x\in R$ such that
$$(1)~axa=a,~(2)~xax=x,~(3e)~(eax)^{\ast}=eax,~(6)~xa^2=a,~(7)~ax^2=x;$$
\indent $(iii)$ there exists $x\in R$ such that
$$(3e)~(eax)^{\ast}=eax,~(6)~xa^2=a,~(7)~ax^2=x;$$
\indent $(iv)$ $a\in R^{\#}\cap R^{(1, 3e)}$.\\
\indent In this case,
$$a^{e, \co}=x=a^{\#}aa^{(1, 3e)}.$$
$(II)$ The following statements are equivalent:\\
\indent $(i)$ $a\in R_{f, \co}$;\\
\indent $(ii)$ there exists $x\in R$ such that
$$(1)~axa=a,~(2)~xax=x,~(4f)~(fxa)^{\ast}=fxa,~(8)~a^2x=a,~(9)~x^2a=x;$$
\indent $(iii)$ there exists $x\in R$ such that
$$(4f)~(fxa)^{\ast}=fxa,~(8)~a^2x=a,~(9)~x^2a=x;$$
\indent $(iv)$ $a\in R^{\#}\cap R^{(1, 4f)}$.\\
\indent In this case,
$$a_{f, \co}=x=a^{(1, 4f)}aa^{\#}.$$
\end{lemma}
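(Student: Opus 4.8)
The plan is to read the cited equivalences as a purely ring-theoretic statement and to prove, in part $(I)$, the chain $(ii)\Leftrightarrow(iii)\Leftrightarrow(iv)$ by direct manipulation of the five equations, and then to bridge these algebraic conditions to the ideal-theoretic definition $(i)$ by establishing $(ii)\Rightarrow(i)$ and $(i)\Rightarrow(iv)$; part $(II)$ will then be obtained as the left--right/involutive mirror of part $(I)$. Throughout, the two workhorses are Lemma~\ref{group-inverse} (to produce $a^{\#}$) and Lemma~\ref{13e-14f-inverse} (to produce a $\{1,3e\}$-inverse).

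For the algebraic core, $(ii)\Rightarrow(iii)$ is immediate (discard $(1),(2)$). For $(iii)\Rightarrow(ii)$ I recover $(1)$ and $(2)$ from $(6),(7)$ alone: substituting $(7)$ into $(6)$ gives $a=xa^{2}=(ax^{2})a^{2}=ax(xa^{2})=axa$, which is $(1)$, and then $xax=xa(ax^{2})=(xa^{2})x^{2}=ax^{2}=x$, which is $(2)$. For $(ii)\Rightarrow(iv)$: equations $(1)$ and $(3e)$ say exactly that $x\in a\{1,3e\}$, so $a\in R^{(1,3e)}$; while $(6)$ gives $a=xa^{2}$ and, iterating $(7)$ as $x=a^{2}x^{3}$, gives $a=xa^{2}=a^{2}(x^{3}a^{2})$, so $a\in R^{\#}$ by Lemma~\ref{group-inverse}. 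Conversely, for $(iv)\Rightarrow(ii)$ I set $x=a^{\#}aa^{(1,3e)}$ and verify $(1),(2),(3e),(6),(7)$ directly, using $aa^{\#}=a^{\#}a$, $aa^{\#}a=a$, $aa^{(1,3e)}a=a$ and $(eaa^{(1,3e)})^{\ast}=eaa^{(1,3e)}$ (for instance $ax=aa^{(1,3e)}$ and $xa=a^{\#}a$, whence $ax^{2}=aa^{(1,3e)}\cdot a^{\#}aa^{(1,3e)}=aa^{\#}a^{(1,3e)}=x$); this simultaneously proves the displayed formula $a^{e,\co}=a^{\#}aa^{(1,3e)}$, and uniqueness follows from the range description in $(i)$.

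To reach the definition, for $(ii)\Rightarrow(i)$ I read off $xR=aR$ from $x=ax^{2}\in aR$ and $a=xa^{2}\in xR$, and obtain $Rx=Ra^{\ast}e$ by rewriting $(3e)$ as $x^{\ast}a^{\ast}e=eax$: then $ax=e^{-1}x^{\ast}a^{\ast}e\in Ra^{\ast}e$ forces $x=xax=x(ax)\in Ra^{\ast}e$, while $a^{\ast}e=a^{\ast}x^{\ast}a^{\ast}e=a^{\ast}(eax)=a^{\ast}eax\in Rx$, so the two left ideals coincide. For the reverse bridge $(i)\Rightarrow(iv)$, I write $x=au_{1}$, $a=xu_{2}$ (from $xR=aR$) and $x=w_{1}a^{\ast}e$, $a^{\ast}e=w_{2}x$ (from $Rx=Ra^{\ast}e$). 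Right-multiplying $axa=a$ by $u_{1}$ gives $x=au_{1}=ax(au_{1})=ax^{2}$, so $x=a^{2}x^{3}\in a^{2}R$ and hence $a=xu_{2}\in a^{2}R$; and $a\in R^{(1,3e)}$ comes cleanly from $a=a(xa)=a\,w_{1}a^{\ast}ea\in Ra^{\ast}ea$ together with Lemma~\ref{13e-14f-inverse}$(i)$.

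The step I expect to be the main obstacle is extracting \emph{group invertibility} from the definition, i.e.\ proving the complementary inclusion $a\in Ra^{2}$ in $(i)\Rightarrow(iv)$. The right-ideal hypothesis $xR=aR$ delivers only the right-hand membership $a\in a^{2}R$; applying the involution to the two hypotheses gives $Rx^{\ast}=Ra^{\ast}$ and $x^{\ast}R=eaR$ together with $a^{\ast}x^{\ast}a^{\ast}=a^{\ast}$, and the naive mirror of the previous paragraph only reproduces $a^{\ast}\in Ra^{\ast 2}$ (the same side). The genuinely delicate point is therefore to force $a^{\ast}\in a^{\ast 2}R$, equivalently $a\in Ra^{2}$, by feeding the weighted condition $Rx=Ra^{\ast}e$ back through the $\{1,3e\}$-structure $a\in Ra^{\ast}ea$ (which is what links the ideal $eaR$ to $a^{\ast}R$); I anticipate this is where the proof must work hardest. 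Once $a\in R^{\#}$ is secured, Lemmas~\ref{group-inverse} and~\ref{13e-14f-inverse} close $(i)\Rightarrow(iv)$, and the whole of part $(II)$ is the symmetric argument with left and right (and $a\{1,3e\}$, $a\{1,4f\}$) interchanged, yielding $a_{f,\co}=a^{(1,4f)}aa^{\#}$.
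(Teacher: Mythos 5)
The paper offers no proof of this lemma at all --- it is quoted from \cite{Mosic2} --- so your write-up must stand on its own, and it does not: the bridge $(i)\Rightarrow(iv)$ is left open. Your algebraic core $(ii)\Leftrightarrow(iii)$, $(ii)\Leftrightarrow(iv)$ and the bridge $(ii)\Rightarrow(i)$ are all correct, but from the definition ($axa=a$, $xR=aR$, $Rx=Ra^{\ast}e$) you only extract $ax^{2}=x$ (hence $a\in a^{2}R$) and $a=(aw_{1})a^{\ast}ea\in Ra^{\ast}ea$ (hence $a\in R^{(1,3e)}$), and you explicitly defer the remaining inclusion $a\in Ra^{2}$ as ``where the proof must work hardest'' instead of proving it. Since $(i)\Rightarrow(iv)$ is the only arrow leaving $(i)$ in your plan, the equivalence is not established; you have shown only that $(ii)$, $(iii)$, $(iv)$ are mutually equivalent and imply $(i)$. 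The missing step is not a formality: the two facts you did derive are strictly weaker than $(iv)$. Indeed, take the backward shift $b=a^{\ast}$ with $a$ as in Example~\ref{eg1} (and $e=1$): then $bb^{\ast}=1$, so $b=b^{2}b^{\ast}\in b^{2}R$ and $b^{\ast}\in b\{1,3\}$, yet $b$ is not group invertible (if $b=yb^{2}$ then $1=bb^{\ast}=yb$, making $b$ invertible). So ``inner invertible $+$ $\{1,3e\}$-invertible $+$ $a\in a^{2}R$'' does not imply $a\in R^{\#}$; the left-ideal hypothesis $Rx=Ra^{\ast}e$ must be fed back in a second time.

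The gap can be closed with exactly the tools you already invoke. From $Rx=Ra^{\ast}e$ write $x=wa^{\ast}e$, and from $xR=aR$ write $a=xv$. Then $a=axa=(aw)a^{\ast}ea$, so Lemma~\ref{13e-14f-inverse}$(i)$ says $y:=w^{\ast}a^{\ast}e\in a\{1,3e\}$, i.e.\ $eay$ is Hermitian. But
\begin{equation*}
eay=eaw^{\ast}a^{\ast}e=(eawa^{\ast}e)^{\ast}=(eax)^{\ast},
\end{equation*}
so Hermitian-ness of $eay$ forces $(eax)^{\ast}=eax$: the element $x$ itself satisfies $(3e)$. Consequently $a^{\ast}eax=a^{\ast}x^{\ast}a^{\ast}e=(axa)^{\ast}e=a^{\ast}e$, whence $xax=w(a^{\ast}eax)=wa^{\ast}e=x$, which is $(2)$; and then $xa^{2}=(xa)(xv)=(xax)v=xv=a$, which is $(6)$ and gives precisely the missing membership $a\in Ra^{2}$. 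Together with your $ax^{2}=x$ this yields all of $(ii)$ from $(i)$, so $(i)\Rightarrow(ii)\Rightarrow(iv)$ is complete, and Lemma~\ref{group-inverse} applies as you intended; the mirrored computation (using $fxR=a^{\ast}R$ and $Rx=Ra$) closes part $(II)$, which otherwise inherits the same gap.
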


\begin{lemma} \cite{WL2}\label{r-core}
Let $a\in R$ and let $e\in R$ be an invertible Hermitian element,
$n\geqslant 2$.
Then the following statements are equivalent:\\
$(i)$ $a\in Ra^{\ast}ea\cap a^nR$;\\
$(ii)$ $a\in R(a^{\ast})^nea$.
\end{lemma}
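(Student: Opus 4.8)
The plan is to prove the two implications separately; in both directions the engine is the same, namely applying the involution to a given factorization of $a$ and then substituting one factor. Throughout I will record the hypotheses as explicit factorizations: $a\in a^nR$ means $a=a^nt$ for some $t$, $a\in Ra^{\ast}ea$ means $a=sa^{\ast}ea$ for some $s$, and $a\in R(a^{\ast})^nea$ means $a=u(a^{\ast})^nea$ for some $u$. Note that the invertibility of $e$ is never used in the algebra; only $e^{\ast}=e$ matters, so that taking $\ast$ of any factorization leaves $e$ fixed.

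For $(i)\Rightarrow(ii)$ I would start from $a=a^nt$ and apply the involution to get $a^{\ast}=t^{\ast}(a^{\ast})^n$, which exhibits $a^{\ast}$ as a left multiple of $(a^{\ast})^n$. Substituting this single occurrence of $a^{\ast}$ into $a=sa^{\ast}ea$ gives $a=s\,t^{\ast}(a^{\ast})^nea\in R(a^{\ast})^nea$, which is exactly $(ii)$. This direction is a one-line substitution and I expect no difficulty.

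The substantive direction is $(ii)\Rightarrow(i)$. From $a=u(a^{\ast})^nea$, membership $a\in Ra^{\ast}ea$ is immediate by peeling off one factor of $a^{\ast}$, namely $a=\big(u(a^{\ast})^{n-1}\big)a^{\ast}ea$. The real content is $a\in a^nR$, and this is where I expect the main obstacle: the naive substitutions only ever produce two-sided expressions of the form $a\in Ra^nR$, leaving a stubborn left factor that must somehow be removed. My plan to clear this is as follows. First apply the involution to obtain $a^{\ast}=a^{\ast}ea^nu^{\ast}$, and feed it back into $(a^{\ast})^n=(a^{\ast})^{n-1}a^{\ast}$ to get $(a^{\ast})^n=(a^{\ast})^nea^nu^{\ast}$. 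Substituting this into $a=u(a^{\ast})^nea$ yields $a=\big(u(a^{\ast})^ne\big)a^n\,u^{\ast}ea$. The key observation that dissolves the obstacle is that the left factor $P:=u(a^{\ast})^ne$ fixes $a$ on the left, since $Pa=u(a^{\ast})^nea=a$ by hypothesis; hence $Pa^n=(Pa)a^{n-1}=a^n$, the offending factor is absorbed, and we are left with $a=a^n(u^{\ast}ea)\in a^nR$.

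I would close by remarking that every step is already valid for $n\geqslant1$ (when $n=1$ both statements collapse to $a\in Ra^{\ast}ea$, since $a\in aR$ holds automatically), so the hypothesis $n\geqslant2$ serves only to make the statement nontrivial and need not be invoked at any special point.
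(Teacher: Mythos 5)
Your proof is correct. Note first that this paper never proves the lemma at all: it is imported from \cite{WL2} as a known result, so there is no internal proof to compare against, and your argument stands as a self-contained, elementary verification of the cited statement. Both directions check out. The direction $(i)\Rightarrow(ii)$ is the routine substitution you describe: from $a=a^{n}t$ one gets $a^{\ast}=t^{\ast}(a^{\ast})^{n}$, and inserting this into $a=sa^{\ast}ea$ gives $a=st^{\ast}(a^{\ast})^{n}ea$. In the substantive direction, your chain is sound: applying $\ast$ to $a=u(a^{\ast})^{n}ea$ and using $e^{\ast}=e$ gives $a^{\ast}=a^{\ast}ea^{n}u^{\ast}$, hence $(a^{\ast})^{n}=(a^{\ast})^{n}ea^{n}u^{\ast}$, hence $a=\bigl(u(a^{\ast})^{n}e\bigr)a^{n}u^{\ast}ea$; the step that actually dissolves the difficulty is your absorption identity, namely that $P=u(a^{\ast})^{n}e$ satisfies $Pa=a$ by hypothesis, so $Pa^{n}=(Pa)a^{n-1}=a^{n}$ and the offending left factor disappears, leaving $a=a^{n}(u^{\ast}ea)\in a^{n}R$. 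Your two side remarks are also accurate and worth keeping: the invertibility of $e$ is never used (only $e^{\ast}=e$ enters), and every step is valid for $n\geqslant 1$, the case $n=1$ being degenerate since $a\in aR$ is automatic in a unital ring, so $n\geqslant 2$ only serves to make clause $(i)$ non-vacuous.
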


\section{Main results}\label{a}
In this section,
we present some new equivalent conditions for the existences of core inverses.

\begin{theorem}\label{thm1}
Let $a\in R$ and let $e\in R$ be an invertible Hermitian element,
$n\geqslant 2$.
We have the following results:\\
$(i)$ $a\in R^{e, \co}$ if and only if $a\in R(a^{\ast})^{n}ea\cap Ra^{n}$.
If $a=s(a^{\ast})^{n}ea$ for some $s\in R$,
then $a^{e, \co}=a^{n-1}s^{\ast}e$;\\
$(ii)$ $a\in R_{f, \co}$ if and only if $a\in af^{-1}(a^{\ast})^{n}R\cap a^{n}R$.
If $a=af^{-1}(a^{\ast})^{n}t$ for some $t\in R$,
then $a_{f, \co}=f^{-1}t^{\ast}a^{n-1}$.
\end{theorem}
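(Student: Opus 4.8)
The plan is to prove part $(i)$ and then obtain part $(ii)$ by a dual/involution argument. The key tool is Lemma~\ref{e-core-inverse}, which tells us that $a \in R^{e,\co}$ is equivalent to $a \in R^{\#} \cap R^{(1,3e)}$, together with Lemma~\ref{13e-14f-inverse}(i) characterizing $\{1,3e\}$-invertibility as $a \in Ra^{\ast}ea$. So the real content is to show that the single condition $a \in R(a^{\ast})^{n}ea \cap Ra^{n}$ is equivalent to the conjunction $a \in (Ra^{\ast}ea) \cap R^{\#}$.

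First I would establish the necessity direction. Assuming $a \in R^{e,\co}$, I get $a \in R^{\#}$ and $a \in Ra^{\ast}ea$ immediately. From $a \in R^{\#}$, Lemma~\ref{group-inverse} gives $a = ya^{2}$ for some $y$, hence by iteration $a = y^{n-1}a^{n} \in Ra^{n}$. To land in $R(a^{\ast})^{n}ea$, I would invoke Lemma~\ref{r-core}: since $a \in Ra^{\ast}ea$ and $a \in Ra^{n} \subseteq a^{n}R$ would need care, but in fact the cleaner route is that $a \in R^{\#}$ yields $a \in a^{n}R$ as well, so $a \in Ra^{\ast}ea \cap a^{n}R$, and Lemma~\ref{r-core} then delivers $a \in R(a^{\ast})^{n}ea$. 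Combined with $a \in Ra^{n}$ this gives the stated condition.

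For sufficiency, suppose $a \in R(a^{\ast})^{n}ea \cap Ra^{n}$. Writing $a = s(a^{\ast})^{n}ea$ we see $a \in R a^{\ast}(\cdots) \subseteq Ra^{\ast}ea$ after absorbing the extra factors of $a^{\ast}$ into the coefficient, so $a$ is $\{1,3e\}$-invertible; explicitly, regrouping $s(a^{\ast})^{n}e = [s(a^{\ast})^{n-1}]a^{\ast}e$ and comparing with Lemma~\ref{13e-14f-inverse}(i) identifies the $\{1,3e\}$-inverse. From $a = s(a^{\ast})^{n}ea$ I would also extract group invertibility: the factor $a = (\cdots)a$ on the right together with $a \in Ra^{n}$ (giving $a = za^{n} = (za^{n-1})a^{2}$, i.e. $a \in Ra^{2}$) and a symmetric one-sided argument yields $a = a^{2}x = ya^{2}$, so $a \in R^{\#}$ by Lemma~\ref{group-inverse}. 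Then Lemma~\ref{e-core-inverse}(I) gives $a \in R^{e,\co}$. The explicit formula $a^{e,\co} = a^{n-1}s^{\ast}e$ I would verify directly by checking it satisfies conditions $(3e),(6),(7)$ of Lemma~\ref{e-core-inverse}(I)(iii), using $(ep)^\ast = ea s^\ast e$-type Hermitian symmetry coming from $a = s(a^{\ast})^{n}ea$.

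The main obstacle I anticipate is the bookkeeping in the sufficiency direction: extracting \emph{both} group invertibility and the correct normalized form of the $\{1,3e\}$-inverse from the single equation $a = s(a^{\ast})^{n}ea$, and then confirming that the candidate $a^{n-1}s^{\ast}e$ is exactly the $e$-core inverse rather than merely a one-sided inverse. Verifying the formula requires showing $ea a^{n-1}s^{\ast}e$ is Hermitian and that the power-absorption identities close up; this is where the hypothesis $a \in Ra^{n}$ (equivalently $a \in R^{\#}$, which lets powers of $a$ cancel against $a^{\#}$) does the essential work. Part $(ii)$ follows by applying part $(i)$ to the involution: replacing the condition $Rx = Ra^{\ast}e$ by $fxR = a^{\ast}R$ corresponds to passing from $a$ to $a^{\ast}$ and interchanging left/right annihilators, so the $f$-dual core statement is the $\ast$-mirror of the $e$-core statement with weight $f$, and the formula $a_{f,\co} = f^{-1}t^{\ast}a^{n-1}$ is the conjugate-transpose of $a^{n-1}s^{\ast}e$ under this correspondence.
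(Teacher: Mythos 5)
Your overall route is the paper's: reduce $e$-core invertibility to $a\in R^{\#}\cap R^{(1,3e)}$ via Lemma~\ref{e-core-inverse}, translate $R^{(1,3e)}$ into $a\in Ra^{\ast}ea$ via Lemma~\ref{13e-14f-inverse}, translate $R^{\#}$ into $a\in a^{n}R\cap Ra^{n}$ via Lemma~\ref{group-inverse}, and pass between $Ra^{\ast}ea\cap a^{n}R$ and $R(a^{\ast})^{n}ea$ via Lemma~\ref{r-core}. The paper runs this as a two-way chain of equivalences, using Lemma~\ref{r-core} in both directions, and then reads off the representation from $a^{e,\co}=a^{\#}aa^{(1,3e)}=a^{\#}a(a^{n-1}s^{\ast}e)=a^{n-1}s^{\ast}e$; your treatment of part $(ii)$ as the $\ast$-dual of part $(i)$ also matches the paper's ``similarly as $(i)$.''

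The one genuine weak point is in your sufficiency direction: you need $a\in a^{2}R$, and you dismiss it with ``a symmetric one-sided argument.'' There is no symmetry to exploit: both hypotheses $a\in Ra^{n}$ and $a=s(a^{\ast})^{n}ea$ are left-sided, and mirroring the computation $a=za^{n}=(za^{n-1})a^{2}\in Ra^{2}$ produces nothing on the right. The missing step is exactly the nontrivial direction of Lemma~\ref{r-core} (namely, $a\in R(a^{\ast})^{n}ea$ implies $a\in a^{n}R$), which the paper invokes as a stated equivalence rather than reproving. Alternatively, you can close the gap with what you already have on the table: since you identified $a^{n-1}s^{\ast}e\in a\{1,3e\}$, it is in particular an inner inverse of $a$, so $a=a(a^{n-1}s^{\ast}e)a=a^{n}(s^{\ast}ea)\in a^{n}R\subseteq a^{2}R$. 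Either fix works, but as written the step is asserted, not proved. A smaller remark: your plan to verify the formula $a^{e,\co}=a^{n-1}s^{\ast}e$ by checking conditions $(3e)$, $(6)$, $(7)$ directly is unnecessary labor, and the Hermitian identity you gesture at is not spelled out; once you know $a\in R^{\#}$ and $a^{n-1}s^{\ast}e\in a\{1,3e\}$, the formula in Lemma~\ref{e-core-inverse} gives $a^{e,\co}=a^{\#}a(a^{n-1}s^{\ast}e)=a^{n-1}s^{\ast}e$ immediately, since $a^{\#}aa^{n-1}=a^{n-1}$ for $n\geqslant 2$.
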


\begin{proof}
$(i)$. According to Lemma~\ref{group-inverse},
it is easy to deduce that $a\in R^{\#}$ if and only if $a\in a^{n}R\cap Ra^{n}$ for $n\geqslant 2$.
From Lemma~\ref{r-core} we know that $a\in R(a^{\ast})^nea$ if and only if $a\in Ra^{\ast}ea\cap a^nR$,
thus

\begin{equation*}
\begin{split}
    a\in R^{e, \co}
    &~\xLongleftrightarrow{Lemma~\ref{e-core-inverse}} a\in R^{(1,3e)}\cap R^{\#}\\
    &~\xLongleftrightarrow{Lemma~\ref{13e-14f-inverse}} a\in Ra^{\ast}ea\cap R^{\#}\\
    &~\Longleftrightarrow a\in Ra^{\ast}ea\cap a^{n}R\cap Ra^{n}\\
    &~\Longleftrightarrow a\in R(a^{\ast})^{n}ea\cap Ra^{n}.
\end{split}
\end{equation*}

Next,
we give the representation of $a^{e, \co}$.
Since $a\in R(a^{\ast})^{n}ea\cap Ra^{n}$,
there exists $s\in R$ such that $a=s(a^{\ast})^{n}ea$.
By Lemma~\ref{13e-14f-inverse},
we have
\begin{equation*}
\begin{split}
    (s(a^{\ast})^{n-1})^{\ast}e=a^{n-1}s^{\ast}e\in a\{1,3e\}.
\end{split}
\end{equation*}
Using Lemma~\ref{e-core-inverse},
we obtain
\begin{equation*}
\begin{split}
    a^{e, \co}=a^{\#}aa^{(1,3e)}=a^{\#}a(a^{n-1}s^{\ast}e)=a^{n-1}s^{\ast}e.
\end{split}
\end{equation*}

$(ii)$. Similarly as $(i)$.
\end{proof}

From the above proof,
it is easy to see that Theorem~\ref{thm1} is also true in a semigroup.
Next we characterize the existence of $e$-core inverse by idempotents and units.

\begin{theorem} \label{thm2}
Let $a\in R$ and let $e\in R$ be an invertible Hermitian element,
$n\geqslant 1$ is a positive integer.
The following statements are equivalent: \\
$(i)$ $a\in R^{e, \co}$; \\
$(ii)$ there exists a unique idempotent $p$ such that $(ep)^{\ast}=ep$, $pa=0$, $u=a^{n}+p\in R^{-1}$;\\
$(iii)$ there exists an element $s$ such that $(es)^{\ast}=es$, $sa=0$, $v=a^{n}+s\in R^{-1}$;\\
$(iv)$ there exists a unique idempotent $q$ such that $(eq)^{\ast}=eq$, $qa=0$, $w=a^{n}(1-q)+q\in R^{-1}$;\\
$(v)$ there exists an element $t$ such that $(et)^{\ast}=et$, $ta=0$, $z=a^{n}(1-t)+t\in R^{-1}$.\\
In this case,
when $n=1$,
$$a^{e, \co}=u^{-1}(1-p)=v^{-1}av^{-1}=(1-q)w^{-1}=z^{-1}a(1-t)z^{-1},$$
when $n\geqslant 2$,
$$a^{e, \co}=a^{n-1}u^{-1}=a^{n-1}v^{-1}=a^{n-1}(1-q)w^{-1}=a^{n-1}(1-t)z^{-1}.$$
\end{theorem}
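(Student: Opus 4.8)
The plan is to run the two cycles $(i)\Rightarrow(ii)\Rightarrow(iii)\Rightarrow(i)$ and $(i)\Rightarrow(iv)\Rightarrow(v)\Rightarrow(i)$. The implications $(ii)\Rightarrow(iii)$ and $(iv)\Rightarrow(v)$ are immediate, since an idempotent is in particular an element. I also expect the unique idempotents in $(ii)$ and $(iv)$ to be the \emph{same} element $p=q=1-a\,a^{e,\co}$, with both $a^{n}+p$ and $a^{n}(1-p)+p$ invertible, so that the two constructive directions can be carried out together.

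For $(i)\Rightarrow(ii)$ and $(i)\Rightarrow(iv)$ I would set $x=a^{e,\co}$ and $p:=1-ax$, and first record the identities coming from Lemma~\ref{e-core-inverse}$(I)(iii)$: $f:=ax$ is idempotent, $fa=a$, $fx=x$, $xa^{2}=a$, and by induction $a^{n}x^{n}=ax$ and $x^{n}a^{n}=xa$. Then $p$ is an idempotent with $(ep)^{\ast}=e-(eax)^{\ast}=e-eax=ep$ and $pa=a-axa=0$. The heart of this direction is the verification that $a^{n}+p$ and $a^{n}(1-p)+p$ are units, with explicit two-sided inverses $x^{n}+1-xa$ and $x^{n}(1-p)+p$; both identities collapse after substituting $a^{n}x^{n}=ax$, $x^{n}a^{n}=xa$, $fx^{n}=x^{n}$, $fa^{n}=a^{n}$, together with $ax^{2}a=xa$ and $a^{n}xa=a^{n}$. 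The four representations of $a^{e,\co}$ are then read off by left/right multiplying these inverses (e.g. $a^{n-1}(x^{n}+1-xa)=x$ and $(x^{n}+1-xa)(1-p)=x$). Uniqueness of $p$ (and that $q=p$) follows because any idempotent with $pa=0$ and $u:=a^{n}+p$ a unit satisfies $pu=pa^{n}+p=p$, hence $pu^{-1}=p$, and then $a\,a^{e,\co}=1-p$ is forced.

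For $(iii)\Rightarrow(i)$ and $(v)\Rightarrow(i)$, from $sa=0$ (resp.\ $ta=0$) I obtain $va=a^{n+1}$ (resp.\ $za=a^{n+1}$), so $a\in Ra^{n}$; and from $(es)^{\ast}=es$ together with $sa=0$ I get $a^{\ast}es=0$ (taking adjoints of $esa=0$), whence $a^{\ast}ev=a^{\ast}ea^{n}$ and, after adjoining and using $e^{\ast}=e$, $a\in R(a^{\ast})^{n}ea$; the element of $(v)$ is handled identically, now through $a^{\ast}ez=a^{\ast}ea^{n}(1-t)$. For $n\ge 2$ this is precisely the hypothesis $a\in R(a^{\ast})^{n}ea\cap Ra^{n}$ of Theorem~\ref{thm1}, so $a\in R^{e,\co}$ and the stated formulas $a^{e,\co}=a^{n-1}v^{-1}$, etc., are exactly that theorem's representation.

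The main obstacle is the case $n=1$, where Theorem~\ref{thm1} and Lemma~\ref{r-core} are unavailable and where, crucially, $\{1,3e\}$-invertibility together with $a\in Ra^{2}$ does \emph{not} force $a\in R^{\#}$; here invertibility of the witness must be used in an essential way. I would argue thus. The relation $a\in R(a^{\ast})^{1}ea=Ra^{\ast}ea$ gives, via Lemma~\ref{13e-14f-inverse}, a $\{1,3e\}$-inverse $h$ (namely $h=v^{-1}$ in $(iii)$ and $h=(1-t)z^{-1}$ in $(v)$), so $f:=ah$ is an idempotent with $(ef)^{\ast}=ef$, $fa=a$, $fR=aR$; put $p:=1-f$, so $pa=0$ and $(ep)^{\ast}=ep$. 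A direct check (using $sf=0$, $fa=a$) shows the unit $v$ (resp.\ $z$) is upper triangular relative to $f$, namely $pvf=0$ and $fvf=af=a^{2}h$, and that its lower corner $pvp$ lies in $pRp$, is fixed by the involution $y\mapsto e^{-1}y^{\ast}e$, and admits the one-sided inverse $pv^{-1}p$ (from $pv=pvp$ and $pvv^{-1}=p$). Applying that involution promotes this one-sided inverse to a genuine inverse in $pRp$, so $pvp$ is a unit of $pRp$; factoring the upper-triangular unit as unitriangular times block diagonal then forces $fvf=a^{2}h$ to be a unit of the corner ring $fRf$. Finally, if $c_{0}\in fRf$ satisfies $(a^{2}h)c_{0}=f=c_{0}(a^{2}h)$, then $a=fa=(a^{2}h)c_{0}a=a^{2}(hc_{0}a)\in a^{2}R$, which with $a\in Ra^{2}$ gives $a\in R^{\#}$ by Lemma~\ref{group-inverse}; combined with $\{1,3e\}$-invertibility and Lemma~\ref{e-core-inverse} we conclude $a\in R^{e,\co}$, and the $n=1$ representations are recovered from the same corner data. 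The one ring-theoretic fact used here—that an element upper triangular with respect to an idempotent whose lower corner is a unit has its upper corner a unit as well—I would include as a short lemma.
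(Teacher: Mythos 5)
There is a genuine gap in the uniqueness part of $(i)\Rightarrow(ii)$ (and the same issue recurs for $q$ in $(i)\Rightarrow(iv)$). You claim that any idempotent $p$ with $pa=0$ and $u=a^{n}+p\in R^{-1}$ satisfies $pu^{-1}=p$, and that this forces $aa^{e,\co}=1-p$. The last inference is false: those two conditions alone do not determine $p$. In $R=M_{2}(\mathbb{C})$ with conjugate transpose and $e=1$, take $a=\mat{1}{0}{0}{0}$ and $p=\mat{0}{1}{0}{1}$; then $p^{2}=p$, $pa=0$, and $a^{n}+p=\mat{1}{1}{0}{1}\in R^{-1}$ for every $n$, yet $1-p=\mat{1}{-1}{0}{0}\neq aa^{\co}=a$. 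Uniqueness genuinely requires the hypothesis $(ep)^{\ast}=ep$, which your argument never invokes. The repair is short and is exactly what the paper does: with $p_{0}=1-aa^{e,\co}$, your own data give $p=pp_{0}$ (since $p(1-p_{0})=(pa)a^{e,\co}=0$) and $p_{0}=p_{0}p$ (since $(1-p)u=a^{n}$ gives $1-p=a^{n}u^{-1}$, whence $p_{0}(1-p)=(a^{n}-aa^{e,\co}a^{n})u^{-1}=0$); then the two $e$-Hermitian conditions yield $ep=(ep)^{\ast}=(epe^{-1}\cdot ep_{0})^{\ast}=ep_{0}e^{-1}ep=ep_{0}p=ep_{0}$, so $p=p_{0}$. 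The paper phrases this through ${}^{\circ}(a^{n})={}^{\circ}(1-p)$, but the content is the same, and the Hermitian condition is precisely where uniqueness comes from.

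Apart from this, your proposal is correct, and in two places it takes a genuinely different route. For $(i)\Rightarrow(ii),(iv)$ you exhibit explicit two-sided inverses $u^{-1}=x^{n}+1-xa$ and $w^{-1}=x^{n}+p$, where $x=a^{e,\co}$ (note $x^{n}(1-p)=x^{n}ax=x^{n}$); these verify correctly using $a^{n}x^{n}=ax$, $x^{n}a^{n}=xa$, $px=xp=0$, $a^{n}xa=a^{n}$, and they also deliver the representation formulas at once, whereas the paper proves invertibility by induction on $n$ through the Jacobson-type Lemma~\ref{J.L}; your version is more direct. For $(iii),(v)\Rightarrow(i)$ with $n=1$ (the only case Theorem~\ref{thm1} does not cover), the paper shows $a\in a^{2}R$ by the explicit chain (\ref{2.8})--(\ref{2.11}) ending in $a=a^{2}v^{-2}a$, while you triangularize the unit $v$ with respect to $f=ah$ ($h$ the $\{1,3e\}$-inverse from Lemma~\ref{13e-14f-inverse}) and use the involution $y\mapsto e^{-1}y^{\ast}e$ of the corner ring $pRp$, $p=1-f$, to upgrade the one-sided inverse $pv^{-1}p$ of $pvp$ to a two-sided one, concluding that $fvf=a^{2}h$ is a unit of $fRf$ and hence $a=fa\in a^{2}R$. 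I checked the supporting identities ($sf=0$, $pvf=0$, $pv=pvp$, invariance of $pvp$ via $a^{\ast}ep=(pa)^{\ast}e=0$, and the triangular-factorization lemma); this is a valid and more structural alternative, at the price of the auxiliary lemma you flag. For $n\geqslant 2$ your reduction to Theorem~\ref{thm1} coincides with the paper's.
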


\begin{proof}
$(i)\Rightarrow (ii).$ Set $p=1-aa^{e, \co}$,
we observe first that $p$ is an idempotent satisfying $(ep)^{\ast}=ep$ and $pa=0$.
Next we show that $u=a^{n}+p=a^{n}+1-aa^{e, \co}\in R^{-1}$.

When $n=1$,
it is easy to verify
\begin{equation*}
\begin{split}
    (a+1-aa^{e, \co})(a^{e, \co}+1-a^{e, \co}a)=1=(a^{e, \co}+1-a^{e, \co}a)(a+1-aa^{e, \co}),
\end{split}
\end{equation*}
thus $u=a+1-aa^{e, \co}\in R^{-1}$.

When $n\geqslant 2$,
we prove the invertibility of $u$ by induction on $n$.
Since $1+aa^{e, \co}(a-1)=a+1-aa^{e, \co}\in R^{-1}$,
from Lemma~\ref{J.L} we obtain that $a^{2}a^{e, \co}+1-aa^{e, \co}=1+(a-1)aa^{e, \co}\in R^{-1}$.
Therefore,
$a^{2}+1-aa^{e, \co}=(a^{2}a^{e, \co}+1-aa^{e, \co})(a+1-aa^{e, \co})$ is invertible.
Assume that $n>2$ and $a^{n-1}+1-aa^{e, \co}\in R^{-1}$,
then $a^{n}+1-aa^{e, \co}=(a^{2}a^{e, \co}+1-aa^{e, \co})(a^{n-1}+1-aa^{e, \co})$ is invertible.

Finally,
we prove the uniqueness of the idempotent $p$.
It is necessary to show that $^{\circ}\!(a^{n})=$ $^{\circ}\!(1-p)$.
If $xa^{n}=0$,
then
\begin{equation*}
\begin{split}
    0=xa^{n}=x(1-p)a^{n}=x(1-p)(a^{n}+p).
\end{split}
\end{equation*}
Since $a^{n}+p\in R^{-1}$,
we have $x(1-p)=0$.
Conversely,
if $y(1-p)=0$,
then $ya^{n}=y(1-p)a^{n}=0$.
Thus $^{\circ}\!(a^{n})=$ $^{\circ}\!(1-p)$.
Assume that $p, p_1$ are both idempotents which satisfy the statement $(ii)$,
then $^{\circ}\!(1-p)=$ $^{\circ}\!(a^{n})=$ $^{\circ}\!(1-p_1)$.
By $p\in$ $^{\circ}\!(1-p)=$ $^{\circ}\!(1-p_1)$,
we obtain $p=pp_1$.
Similarly,
we can get $p_1=p_1p$ from $p_1\in$ $^{\circ}\!(1-p_1)=$ $^{\circ}\!(1-p)$.
Then
\begin{equation*}
\begin{split}
    ep=(ep)^{\ast}=(epp_1)^{\ast}=(epe^{-1}ep_1)^{\ast}=(ep_1)^{\ast}(e^{-1})^{\ast}(ep)^{\ast}=ep_1e^{-1}ep=ep_1p=ep_1,
\end{split}
\end{equation*}
thus $p=p_1$.

$(ii)\Rightarrow (i).$ Suppose that there exists a unique idempotent $p$ such that $(ep)^{\ast}=ep$, $pa=0$, $u=a^{n}+p\in R^{-1}$.

When $n=1$,
set $x=u^{-1}(1-p)$.
Now we prove that $x$ is the $e$-core inverse of $a$.
Since $(1-p)u=a$,
$ua^2=a$ and $pu=p$,
we obtain
\begin{equation}\label{2.1}
\begin{split}
    1-p=au^{-1},
\end{split}
\end{equation}
\begin{equation}\label{2.2}
\begin{split}
    a=u^{-1}a^2,
\end{split}
\end{equation}
\begin{equation}\label{2.3}
\begin{split}
    (1-p)u^{-1}=u^{-1}-p.
\end{split}
\end{equation}
Therefore,
\begin{equation*}
\begin{split}
    eax=eau^{-1}(1-p)\stackrel{(\ref{2.1})}{=}e(1-p)(1-p)=e-ep \text{~is Hermitian},
\end{split}
\end{equation*}
\begin{equation*}
\begin{split}
    xa^2=u^{-1}(1-p)a^2=u^{-1}a^2\stackrel{(\ref{2.2})}{=}a,
\end{split}
\end{equation*}
\begin{equation*}
\begin{split}
    ax^2
    &~=au^{-1}(1-p)u^{-1}(1-p)\stackrel{(\ref{2.1})}{=}(1-p)u^{-1}(1-p)\\
    &~\stackrel{(\ref{2.3})}{=}(u^{-1}-p)(1-p)=u^{-1}(1-p)=x.
\end{split}
\end{equation*}
As a consequence,
$a\in R^{e, \co}$ with $a^{e, \co}=u^{-1}(1-p)$ by Lemma~\ref{e-core-inverse}.

When $n\geqslant 2$,
from $u=a^{n}+p\in R^{-1}$ and $p^{\ast}=(e^{-1}ep)^{\ast}=epe^{-1}$,
we get $u^{\ast}=(a^{\ast})^{n}+epe^{-1}\in R^{-1}$.
Since $ua=a^{n+1}$ and $u^{\ast}ea=(a^{\ast})^{n}ea$,
\begin{equation*}
\begin{split}
    a=u^{-1}a^{n+1}\in Ra^{n}
\end{split}
\end{equation*}
and
\begin{equation*}
\begin{split}
    a=(u^{\ast}e)^{-1}(a^{\ast})^{n}ea\in R(a^{\ast})^{n}ea.
\end{split}
\end{equation*}
Thus $a\in R(a^{\ast})^{n}ea\cap Ra^{n}$,
and then $a\in R^{e, \co}$ and $a^{e, \co}=a^{n-1}(eu)^{-1}e=a^{n-1}u^{-1}$ by Theorem~\ref{thm1}.

$(ii)\Rightarrow (iii)$ is trivial.

$(iii)\Rightarrow (i).$ Assume that there exists an element $s$ such that $(es)^{\ast}=es$, $sa=0$, $v=a^{n}+s\in R^{-1}$.

When $n=1$.
Since $v=a^{n}+s\in R^{-1}$ and
\begin{equation}\label{2.4}
\begin{split}
    s^{\ast}=(e^{-1}es)^{\ast}=ese^{-1},
\end{split}
\end{equation}
we get $v^{\ast}=a^{\ast}+ese^{-1}\in R^{-1}$.
From $va=a^2$ and $v^{\ast}ea=a^{\ast}ea$ we obtain
\begin{equation}\label{2.5}
\begin{split}
    a=v^{-1}a^2\in Ra^2
\end{split}
\end{equation}
and
\begin{equation}\label{2.6}
\begin{split}
    a=(v^{\ast}e)^{-1}a^{\ast}ea\in Ra^{\ast}ea.
\end{split}
\end{equation}
It remains to show that $a\in a^2R$.
By Lemma~\ref{13e-14f-inverse},
the equation (\ref{2.6}) shows that $a\in R^{(1, 3e)}$ with $v^{-1}\in a\{1, 3e\}$.
Hence
\begin{equation}\label{2.7}
\begin{split}
    a=aa^{(1, 3e)}a=av^{-1}a.
\end{split}
\end{equation}
In addtion,
$sv=s^2$ leads to $s=s^2v^{-1}$,
thus
\begin{equation}\label{2.8}
\begin{split}
    es^2v^{-1}
    &~=es=(es)^{\ast}=(es^2v^{-1})^{\ast}=(v^{\ast})^{-1}s^{\ast}es\\
    &~\stackrel{(\ref{2.4})}{=}(v^{\ast})^{-1}ese^{-1}es=(v^{\ast})^{-1}es^2.
\end{split}
\end{equation}
Moreover,
\begin{equation}\label{2.9}
\begin{split}
    esv^{-2}
    &~\stackrel{(\ref{2.8})}{=}[(v^{\ast})^{-1}es^2]v^{-2} = (v^{\ast})^{-1}(es^2v^{-1})v^{-1}\\
    &~\stackrel{(\ref{2.8})}{=}(v^{\ast})^{-1}[(v^{\ast})^{-1}es^2]v^{-1}=(v^{\ast})^{-2}es^2v^{-1}\stackrel{(\ref{2.8})}{=}(v^{\ast})^{-2}es,
\end{split}
\end{equation}
thus
\begin{equation}\label{2.10}
\begin{split}
    ev^{-1}
    &~=evv^{-2}=e(a+s)v^{-2}=eav^{-2}+esv^{-2}\stackrel{(\ref{2.9})}{=}eav^{-2}+(v^{\ast})^{-2}es.
\end{split}
\end{equation}
Therefore,
\begin{equation}\label{2.11}
\begin{split}
    a
    &~\stackrel{(\ref{2.7})}{=}av^{-1}a=ae^{-1}(ev^{-1})a\stackrel{(\ref{2.10})}{=}ae^{-1}[eav^{-2}+(v^{\ast})^{-2}es]a=a^2v^{-2}a\in a^2R.
\end{split}
\end{equation}
According to Lemma~\ref{group-inverse},
equations (\ref{2.5}) and (\ref{2.11}) show that $a\in R^{\#}$ with $a^{\#}=v^{-2}a$.
And from (\ref{2.6}) and Lemma~\ref{e-core-inverse},
$a\in R^{e, \co}$ with
\begin{equation*}
\begin{split}
    a^{e, \co}=a^{\#}aa^{(1, 3e)}=v^{-2}aav^{-1}\stackrel{(\ref{2.5})}{=}v^{-1}av^{-1}.
\end{split}
\end{equation*}

When $n\geqslant 2$,
from $v=a^{n}+s\in R^{-1}$ and $s^{\ast}=ese^{-1}$,
we get $v^{\ast}=(a^{\ast})^{n}+ese^{-1}\in R^{-1}$.
Since $va=a^{n+1}$ and $v^{\ast}ea=(a^{\ast})^{n}ea$,
\begin{equation*}
\begin{split}
    a=v^{-1}a^{n+1}\in Ra^{n}
\end{split}
\end{equation*}
and
\begin{equation*}
\begin{split}
    a=(v^{\ast}e)^{-1}(a^{\ast})^{n}ea\in R(a^{\ast})^{n}ea.
\end{split}
\end{equation*}
Thus $a\in R(a^{\ast})^{n}ea\cap Ra^{n}$,
and then $a\in R^{e, \co}$ and $a^{e, \co}=a^{n-1}(ev)^{-1}e=a^{n-1}v^{-1}$ by Theorem~\ref{thm1}.

$(i)\Rightarrow (iv).$ Set $q=1-aa^{e, \co}$,
then $q$ is an idempotent satisfying $(eq)^{\ast}=eq$ and $qa=0$.
Now we prove that $w=a^{n}(1-q)+q=a^{n+1}a^{e, \co}+1-aa^{e, \co}\in R^{-1}$.

When $n=1$,
from the proof of $(i)\Rightarrow (ii)$ we know that $1+aa^{e, \co}(a-1)=a+1-aa^{e, \co}\in R^{-1}$,
thus $w=a^{2}a^{e, \co}+1-aa^{e, \co}=1+(a-1)aa^{e, \co}\in R^{-1}$ by the Lemma~\ref{J.L}.

When $n\geqslant 2$,
from the proof of $(i)\Rightarrow (ii)$ we know that $1+aa^{e, \co}(a^n-1)=a^n+1-aa^{e, \co}\in R^{-1}$,
thus $w=a^{n+1}a^{e, \co}+1-aa^{e, \co}=1+(a^n-1)aa^{e, \co}\in R^{-1}$ by the Lemma~\ref{J.L}.

In the end,
we prove the uniqueness of the idempotent $q$.
It is necessary to show that $^{\circ}\!(a^{n})=$ $^{\circ}\!(1-q)$.
If $xa^{n}=0$,
then
\begin{equation*}
\begin{split}
    0=xa^{n}(1-q)=x(1-q)[a^{n}(1-q)+q].
\end{split}
\end{equation*}
From $a^{n}(1-q)+q\in R^{-1}$ we have $x(1-q)=0$.
Conversely,
if $y(1-q)=0$,
then $ya^{n}=y(1-q)a^{n}=0$.
Hence $^{\circ}\!(a^{n})=$ $^{\circ}\!(1-q)$.
Suppose that $q, q_1$ are both idempotents which satisfy the statement $(iv)$,
then $^{\circ}\!(1-q)=$ $^{\circ}\!(a^{n})=$ $^{\circ}\!(1-q_1)$.
From $q\in$ $^{\circ}\!(1-q)=$ $^{\circ}\!(1-q_1)$ we obtain $q=qq_1$.
Similarly,
$q\in$ $^{\circ}\!(1-q_1)=$ $^{\circ}\!(1-q)$ leads to $q_1=q_1q$.
Then
\begin{equation*}
\begin{split}
    eq=(eq)^{\ast}=(eqq_1)^{\ast}=(eqe^{-1}eq_1)^{\ast}=(eq_1)^{\ast}(e^{-1})^{\ast}(eq)^{\ast}=eq_1e^{-1}eq=eq_1q=eq_1,
\end{split}
\end{equation*}
thus $q=q_1$.

$(iv)\Rightarrow (i).$ Assume that there exists a unique idempotent $q$ such that $(eq)^{\ast}=eq$, $qa=0$, $w=a^{n}(1-q)+q\in R^{-1}$.

When $n=1$,
let $x=(1-q)w^{-1}$.
Now we show that $x$ is the $e$-core inverse of $a$.
Since $(1-q)w=a(1-q)$ and $wa^2=a$,
we get
\begin{equation}\label{2.12}
\begin{split}
    1-q=a(1-q)w^{-1},
\end{split}
\end{equation}
\begin{equation}\label{2.13}
\begin{split}
    a=w^{-1}a^2.
\end{split}
\end{equation}
Therefore,
\begin{equation*}
\begin{split}
    eax=ea(1-q)w^{-1}\stackrel{(\ref{2.12})}{=}e(1-q)=e-eq \text{~is Hermitian},
\end{split}
\end{equation*}
\begin{equation*}
\begin{split}
    xa^2=(1-q)w^{-1}a^2\stackrel{(\ref{2.13})}{=}(1-q)a=a,
\end{split}
\end{equation*}
\begin{equation*}
\begin{split}
    ax^2
    &~=a(1-q)w^{-1}(1-q)w^{-1}\stackrel{(\ref{2.12})}{=}(1-q)w^{-1}=x.
\end{split}
\end{equation*}
As a consequence,
$a\in R^{e, \co}$ with $a^{e, \co}=(1-q)w^{-1}$ by Lemma~\ref{e-core-inverse}.

When $n\geqslant 2$,
from $w=a^{n}(1-q)+q\in R^{-1}$ and $q^{\ast}=(e^{-1}eq)^{\ast}=eqe^{-1}$,
we get $w^{\ast}=(1-p)^{\ast}(a^{\ast})^{n}+eqe^{-1}\in R^{-1}$.
Since $wa=a^{n+1}$ and $w^{\ast}ea=(1-q)^{\ast}(a^{\ast})^{n}ea$,
\begin{equation*}
\begin{split}
    a=w^{-1}a^{n+1}\in Ra^{n}
\end{split}
\end{equation*}
and
\begin{equation*}
\begin{split}
    a=(w^{\ast}e)^{-1}(1-q)^{\ast}(a^{\ast})^{n}ea\in R(a^{\ast})^{n}ea.
\end{split}
\end{equation*}
Therefore,
$a\in R(a^{\ast})^{n}ea\cap Ra^{n}$,
and then $a\in R^{e, \co}$ and $a^{e, \co}=a^{n-1}(1-q)(ew)^{-1}e=a^{n-1}(1-q)w^{-1}$ by Theorem~\ref{thm1}.

$(iv)\Rightarrow (v)$ is trivial.

$(v)\Rightarrow (i).$ Assume that there exists an element $t$ such that $(et)^{\ast}=et$, $ta=0$, $z=a^{n}(1-t)+t\in R^{-1}$.

When $n=1$.
Since $z=a^{n}(1-t)+t\in R^{-1}$ and
\begin{equation}\label{2.14}
\begin{split}
    t^{\ast}=(e^{-1}et)^{\ast}=ete^{-1},
\end{split}
\end{equation}
we get $z^{\ast}=(1-t)^{\ast}a^{\ast}+ese^{-1}\in R^{-1}$.
From $za=a^2$ and $z^{\ast}ea=(1-t)^{\ast}a^{\ast}ea$ we obtain
\begin{equation}\label{2.15}
\begin{split}
    a=z^{-1}a^2\in Ra^2
\end{split}
\end{equation}
and
\begin{equation}\label{2.16}
\begin{split}
    a=(z^{\ast}e)^{-1}(1-t)^{\ast}a^{\ast}ea\in Ra^{\ast}ea.
\end{split}
\end{equation}
It remains to show that $a\in a^2R$.
By Lemma~\ref{13e-14f-inverse},
the equation (\ref{2.16}) shows that $a\in R^{(1, 3e)}$ with $(1-t)z^{-1}\in a\{1, 3e\}$.
Hence
\begin{equation}\label{2.17}
\begin{split}
    a=aa^{(1, 3e)}a=a(1-t)z^{-1}a.
\end{split}
\end{equation}
In addtion,
$tz=t^2$ leads to $t=t^2z^{-1}$,
thus
\begin{equation}\label{2.18}
\begin{split}
    et^2z^{-1}
    &~=et=(et)^{\ast}=(et^2z^{-1})^{\ast}=(z^{\ast})^{-1}t^{\ast}et\\
    &~\stackrel{(\ref{2.14})}{=}(z^{\ast})^{-1}ete^{-1}et=(z^{\ast})^{-1}et^2.
\end{split}
\end{equation}
Moreover,
\begin{equation}\label{2.19}
\begin{split}
    etz^{-2}
    &~\stackrel{(\ref{2.18})}{=}[(z^{\ast})^{-1}et^2]z^{-2} = (z^{\ast})^{-1}(et^2z^{-1})z^{-1}\\
    &~\stackrel{(\ref{2.18})}{=}(z^{\ast})^{-1}[(z^{\ast})^{-1}et^2]z^{-1}=(z^{\ast})^{-2}et^2z^{-1}\stackrel{(\ref{2.18})}{=}(z^{\ast})^{-2}et,
\end{split}
\end{equation}
thus
\begin{equation}\label{2.20}
\begin{split}
    ez^{-1}
    &~=ezz^{-2}=e[a(1-t)+t]z^{-2}=ea(1-t)z^{-2}+etz^{-2}\\
    &~\stackrel{(\ref{2.19})}{=}ea(1-t)z^{-2}+(z^{\ast})^{-2}et.
\end{split}
\end{equation}
Therefore,
\begin{equation}\label{2.21}
\begin{split}
    a
    &~\stackrel{(\ref{2.17})}{=}a(1-t)z^{-1}a=a(1-t)e^{-1}(ez^{-1})a\\
    &~\stackrel{(\ref{2.20})}{=}a(1-t)e^{-1}[ea(1-t)z^{-2}+(z^{\ast})^{-2}et]a\\
    &~=a^2(1-t)z^{-2}a\in a^2R.
\end{split}
\end{equation}
According to Lemma~\ref{group-inverse},
equations (\ref{2.15}) and (\ref{2.21}) show that $a\in R^{\#}$ with $a^{\#}=z^{-2}a$.
And from (\ref{2.16}) and Lemma~\ref{e-core-inverse} deducing that $a\in R^{e, \co}$ with
\begin{equation*}
\begin{split}
    a^{e, \co}=a^{\#}aa^{(1, 3e)}=z^{-2}aa(1-t)z^{-1}\stackrel{(\ref{2.15})}{=}z^{-1}a(1-t)z^{-1}.
\end{split}
\end{equation*}

When $n\geqslant 2$,
from $z=a^{n}(1-t)+t\in R^{-1}$ and $t^{\ast}=ete^{-1}$ we get $z^{\ast}=(1-t)^{\ast}(a^{\ast})^{n}+ete^{-1}\in R^{-1}$.
Since $za=a^{n+1}$ and $z^{\ast}ea=(1-t)^{\ast}(a^{\ast})^{n}ea$,
\begin{equation*}
\begin{split}
    a=z^{-1}a^{n+1}\in Ra^{n}
\end{split}
\end{equation*}
and
\begin{equation*}
\begin{split}
    a=(z^{\ast}e)^{-1}(1-t)^{\ast}(a^{\ast})^{n}ea\in R(a^{\ast})^{n}ea.
\end{split}
\end{equation*}
Thus $a\in R(a^{\ast})^{n}ea\cap Ra^{n}$,
and then $a\in R^{e, \co}$ and $a^{e, \co}=a^{n-1}(1-t)(ev)^{-1}e=a^{n-1}(1-t)v^{-1}$ by Theorem~\ref{thm1}.
\end{proof}

When $e=1$,
we get the following result,
which improves Theorem $3.3$ and Theorem $3.4$ in \cite{Li}.

\begin{corollary}
Let $a\in R$,
$n\geqslant 1$ is a positive integer.
The following statements are equivalent: \\
$(i)$ $a\in R^{\co}$; \\
$(ii)$ there exists a unique projection $p$ such that $pa=0$, $u=a^{n}+p\in R^{-1}$;\\
$(iii)$ there exists a Hermitian element $s$ such that $sa=0$, $v=a^{n}+s\in R^{-1}$;\\
$(iv)$ there exists a unique projection $q$ such that $qa=0$, $w=a^{n}(1-q)+q\in R^{-1}$;\\
$(v)$ there exists a Hermitian element $t$ such that $ta=0$, $z=a^{n}(1-t)+t\in R^{-1}$.\\
In this case,
when $n=1$,
$$a^{\co}=u^{-1}(1-p)=v^{-1}av^{-1}=(1-q)w^{-1}=z^{-1}a(1-t)z^{-1},$$
when $n\geqslant 2$,
$$a^{\co}=a^{n-1}u^{-1}=a^{n-1}v^{-1}=a^{n-1}(1-q)w^{-1}=a^{n-1}(1-t)z^{-1}.$$
\end{corollary}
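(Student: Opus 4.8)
The plan is to obtain this corollary as the direct specialization of Theorem~\ref{thm2} to the weight $e = 1$. First I would observe that $1 \in R$ is an invertible Hermitian element, since $1^{\ast} = 1$ and $1 \in R^{-1}$, so Theorem~\ref{thm2} applies verbatim with $e$ replaced by $1$. Under this substitution the weighted core inverse collapses to the ordinary one: as recorded in the introduction, $a \in R^{1, \co}$ if and only if $a \in R^{\co}$, and $a^{1,\co} = a^{\co}$. Hence statement $(i)$ of Theorem~\ref{thm2} becomes statement $(i)$ of the corollary.

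Next I would simplify each of the auxiliary conditions. With $e = 1$ the Hermitian-type requirement $(ep)^{\ast} = ep$ reads $p^{\ast} = p$; combined with $p^{2} = p$ this says exactly that the idempotent $p$ is a projection, and likewise for $q$. For the elements $s$ and $t$ the conditions $(es)^{\ast} = es$ and $(et)^{\ast} = et$ become $s^{\ast} = s$ and $t^{\ast} = t$, i.e. these elements are Hermitian. The invertibility hypotheses $u = a^{n}+p \in R^{-1}$, $v = a^{n}+s \in R^{-1}$, $w = a^{n}(1-q)+q \in R^{-1}$, $z = a^{n}(1-t)+t \in R^{-1}$ are unaffected by the substitution, as are the uniqueness assertions for $p$ and $q$. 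Thus statements $(ii)$--$(v)$ of Theorem~\ref{thm2} specialize precisely to statements $(ii)$--$(v)$ of the corollary.

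Finally I would read off the representations. The formulas supplied by Theorem~\ref{thm2} do not involve $e$ explicitly: for $n=1$ they are $u^{-1}(1-p)$, $v^{-1}av^{-1}$, $(1-q)w^{-1}$, $z^{-1}a(1-t)z^{-1}$, and for $n\geqslant 2$ they are $a^{n-1}u^{-1}$, $a^{n-1}v^{-1}$, $a^{n-1}(1-q)w^{-1}$, $a^{n-1}(1-t)z^{-1}$. Substituting $e=1$ and using $a^{1,\co}=a^{\co}$ yields exactly the claimed expressions for $a^{\co}$. There is no genuine obstacle here, since the whole content is already packaged in Theorem~\ref{thm2}; the only point deserving a word of justification is the equivalence between ``$p$ is an idempotent with $p^{\ast}=p$'' and ``$p$ is a projection,'' which is immediate from the definition of projection given in the introduction.
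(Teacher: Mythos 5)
Your proposal is correct and is exactly the paper's own route: the paper obtains this corollary simply by setting $e=1$ in Theorem~\ref{thm2}, under which $(ep)^{\ast}=ep$ becomes $p^{\ast}=p$ (so idempotents become projections and the elements $s,t$ become Hermitian) and the representation formulas carry over unchanged. Nothing further is needed.
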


There is an analogous result for the $f$-dual core inverse of $a\in R$,
we omit its proof.

\begin{theorem} \label{thm3}
Let $a\in R$ and let $f\in R$ be an invertible Hermitian element,
$n\geqslant 1$ is a positive integer.
The following statements are equivalent: \\
$(i)$ $a\in R_{f, \co}$; \\
$(ii)$ there exists a unique idempotent $p$ such that $(fp)^{\ast}=fp$, $ap=0$, $u=a^{n}+p\in R^{-1}$;\\
$(iii)$ there exists an element $s$ such that $(fs)^{\ast}=fs$, $as=0$, $v=a^{n}+s\in R^{-1}$;\\
$(iv)$ there exists a unique idempotent $q$ such that $(fq)^{\ast}=fq$, $aq=0$, $w=(1-q)a^{n}+q\in R^{-1}$;\\
$(v)$ there exists an element $t$ such that $(ft)^{\ast}=ft$, $at=0$, $z=(1-t)a^{n}+t\in R^{-1}$.\\
In this case,
when $n=1$,
$$a_{f, \co}=(1-p)u^{-1}=v^{-1}av^{-1}=w^{-1}(1-q)=z^{-1}(1-t)az^{-1},$$
when $n\geqslant 2$,
$$a_{f, \co}=u^{-1}a^{n-1}=v^{-1}a^{n-1}=(1-q)a^{n-1}w^{-1}=(1-t)a^{n-1}z^{-1}.$$
\end{theorem}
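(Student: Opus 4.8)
The plan is to run the proof of Theorem~\ref{thm2} essentially verbatim, transported across the left--right duality supplied by the involution. By Lemma~\ref{e-core-inverse}$(II)$ together with Lemma~\ref{13e-14f-inverse}$(ii)$, the defining condition for the $f$-dual core inverse is $a\in R^{\#}\cap R^{(1,4f)}$, i.e. $a\in a^{n}R\cap af^{-1}a^{\ast}R$; this is the mirror image, under $\ast$ and the swap of sides, of the $e$-core condition $a\in Ra^{n}\cap R(a^{\ast})^{n}ea$ exploited in Theorem~\ref{thm2}. The cleanest way to organize the whole argument is to record the dictionary identity $a\in R_{f,\co}\iff a^{\ast}\in R^{f^{-1},\co}$: both sides unwind, via the two lemmas just cited and $(f^{-1})^{\ast}=f^{-1}$, to $a\in R^{\#}$ and $a\in af^{-1}a^{\ast}R$. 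Granting this, one applies Theorem~\ref{thm2} to the pair $(a^{\ast},f^{-1})$ (note $f^{-1}$ is again invertible Hermitian) and conjugates by $\ast$; the idempotent $p'$ produced there yields $q=(p')^{\ast}$ with $aq=0$, $(fq)^{\ast}=fq$ (since $(f^{-1}p')^{\ast}=f^{-1}p'$ transposes to $fq=p'f=(fq)^{\ast}$), and $a^{n}+q\in R^{-1}$, which is exactly statement $(ii)$. Equivalently, and this is what I would actually write out, every object of the earlier proof is reflected: one replaces $p=1-aa^{e,\co}$ by $p=1-a_{f,\co}a$, the constraint $pa=0$ by $ap=0$, the factor $a^{n}(1-q)$ by $(1-q)a^{n}$, and left annihilators by right annihilators.

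For the forward implication $(i)\Rightarrow(ii)$ I would set $p=1-a_{f,\co}a$. The identities $aa_{f,\co}a=a$, $a_{f,\co}aa_{f,\co}=a_{f,\co}$, $a^{2}a_{f,\co}=a$, $a_{f,\co}^{2}a=a_{f,\co}$ and $(fa_{f,\co}a)^{\ast}=fa_{f,\co}a$ show at once that $p$ is idempotent with $ap=0$ and $(fp)^{\ast}=fp$. Invertibility of $u=a^{n}+p$ follows as before: for $n=1$ one checks directly that $a_{f,\co}+1-aa_{f,\co}$ is a two-sided inverse of $a+1-a_{f,\co}a$, and for $n\geqslant 2$ one induces using Lemma~\ref{J.L}. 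For uniqueness I would establish the right-annihilator identity $(a^{n})^{\circ}=(1-p)^{\circ}$ (using $a^{n}p=0$ and $u\in R^{-1}$), so that two candidates $p,p_{1}$ satisfy $p=p_{1}p$ and $p_{1}=pp_{1}$; the Hermitian condition then collapses them via $fp=(fp)^{\ast}=(fp_{1}p)^{\ast}=\cdots=fp_{1}$, the exact reflection of the $e$-core computation with the sides exchanged.

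The remaining implications transpose step by step. For $n\geqslant 2$, each of $(ii)\Rightarrow(i)$, $(iii)\Rightarrow(i)$, $(iv)\Rightarrow(i)$, $(v)\Rightarrow(i)$ reduces quickly to exhibiting $a\in a^{n}R\cap af^{-1}a^{\ast}R$ and quoting Theorem~\ref{thm1}$(ii)$, from whose formula the representations $a_{f,\co}=u^{-1}a^{n-1}$ and its variants drop out directly. For $n=1$ the only laborious parts are $(iii)\Rightarrow(i)$ and $(v)\Rightarrow(i)$: there one reproduces the chains (\ref{2.4})--(\ref{2.11}) and (\ref{2.14})--(\ref{2.21}), now written with $af^{-1}a^{\ast}R$ in place of $Ra^{\ast}ea$ and all products reversed, in order to extract $a\in a^{n}R$ from a $\{1,4f\}$-inverse and then apply Lemma~\ref{group-inverse} and Lemma~\ref{e-core-inverse}$(II)$. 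The implications $(ii)\Rightarrow(iii)$ and $(iv)\Rightarrow(v)$ are trivial. The one point demanding care — the main, if modest, obstacle — is purely bookkeeping: since $\ast$ transposes every product, one must keep the weights $f,f^{-1}$ and the idempotents on the correct side at each line, so that the $\{1,4f\}$-representation always presents $f^{-1}$ to the left of $a^{\ast}$ and all annihilator conditions stay right-handed. Once that dictionary is fixed, the entire argument is a mechanical reflection of Theorem~\ref{thm2}.
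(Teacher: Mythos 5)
Your strategy --- transporting Theorem~\ref{thm2} across the involution via the dictionary $a\in R_{f, \co}\Leftrightarrow a^{\ast}\in R^{f^{-1}, \co}$, equivalently $a_{f, \co}=\left((a^{\ast})^{f^{-1}, \co}\right)^{\ast}$ --- is exactly what the paper intends: it omits the proof of this theorem as ``analogous,'' and your bookkeeping is set up correctly (idempotents pass to their adjoints, the weight $f$ to $f^{-1}$, left annihilators to right annihilators, and $(f^{-1}p')^{\ast}=f^{-1}p'$ does transpose to $(fq)^{\ast}=fq$). The direct verifications you sketch for $(i)\Rightarrow(ii)$ and for uniqueness are sound, so as far as the five equivalences go, your proposal is the paper's own (unwritten) proof.

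The genuine gap sits exactly at the point you dismiss as automatic: the claim that the stated representations ``drop out directly.'' Since $\ast$ reverses products, what your dualization actually yields for $n\geqslant 2$ --- equivalently, what Theorem~\ref{thm1}$(ii)$ yields, since its output always has the form $f^{-1}(\cdot)^{\ast}a^{n-1}$, with $a^{n-1}$ on the right --- is
$$a_{f, \co}=w^{-1}(1-q)a^{n-1}\qquad\text{and}\qquad a_{f, \co}=z^{-1}(1-t)a^{n-1},$$
with the inverses on the \emph{left}, not the printed $(1-q)a^{n-1}w^{-1}$ and $(1-t)a^{n-1}z^{-1}$. For $(iv)$ the discrepancy can be bridged: the unique idempotent $q$ is necessarily $1-a_{f, \co}a$, and one checks that $w$ commutes with $(1-q)a^{n-1}=a_{f, \co}a^{n}$ (both $w\,a_{f, \co}a^{n}$ and $a_{f, \co}a^{n}\,w$ equal $a_{f, \co}a^{2n}$, using $a^{2}a_{f, \co}=a$ and $a_{f, \co}aa_{f, \co}=a_{f, \co}$), so the two expressions agree --- but that commutation is an extra argument your proposal does not contain. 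For $(v)$, where $t$ need not be idempotent, the printed formula is in fact \emph{false}, so no argument can produce it: in $M_{2}(\mathbb{C})$ with conjugate transpose, $f=1$, $n=2$, take
$$a=\begin{pmatrix}1&1\\0&0\end{pmatrix},\qquad t=\begin{pmatrix}1&-1\\-1&1\end{pmatrix},\qquad z=(1-t)a^{2}+t=\begin{pmatrix}1&-1\\0&2\end{pmatrix}\in R^{-1},$$
where $t^{\ast}=t$ and $at=0$; then $z^{-1}(1-t)a=\frac{1}{2}\begin{pmatrix}1&1\\1&1\end{pmatrix}=a_{\co}$, while $(1-t)az^{-1}=\begin{pmatrix}0&0\\1&1\end{pmatrix}\neq a_{\co}$. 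So you should prove the corrected formula $z^{-1}(1-t)a^{n-1}$ (flagging the misprint in the statement) and supply the commutation step for $(iv)$; as written, your proposal endorses, in case $(v)$, an identity that fails.
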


When $f=1$,
we get the following result.

\begin{corollary}
Let $a\in R$,
$n\geqslant 1$ is a positive integer.
The following statements are equivalent: \\
$(i)$ $a\in R_{\co}$; \\
$(ii)$ there exists a unique projection $p$ such that $ap=0$, $u=a^{n}+p\in R^{-1}$;\\
$(iii)$ there exists a Hermitian element $s$ such that $as=0$, $v=a^{n}+s\in R^{-1}$;\\
$(iv)$ there exists a unique projection $q$ such that $aq=0$, $w=(1-q)a^{n}+q\in R^{-1}$;\\
$(v)$ there exists a Hermitian element $t$ such that $at=0$, $z=(1-t)a^{n}+t\in R^{-1}$.\\
In this case,
when $n=1$,
$$a_{\co}=(1-p)u^{-1}=v^{-1}av^{-1}=w^{-1}(1-q)=z^{-1}(1-t)az^{-1},$$
when $n\geqslant 2$,
$$a_{\co}=u^{-1}a^{n-1}=v^{-1}a^{n-1}=(1-q)a^{n-1}w^{-1}=(1-t)a^{n-1}z^{-1}.$$
\end{corollary}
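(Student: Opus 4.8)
The plan is to read off this corollary as the $f=1$ specialization of Theorem~\ref{thm3}. First I would note that $f=1$ is an invertible Hermitian element ($1^{\ast}=1$ and $1\in R^{-1}$), so Theorem~\ref{thm3} applies with this weight; by the convention fixed in the introduction, $a_{f,\co}=a_{\co}$ when $f=1$, so condition $(i)$ of the corollary is exactly condition $(i)$ of Theorem~\ref{thm3} for this weight.

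Next I would specialize the remaining conditions. Setting $f=1$ turns $(fp)^{\ast}=fp$ into $p^{\ast}=p$; since $p$ is already idempotent, an idempotent with $p^{\ast}=p$ is precisely a projection, so condition $(ii)$ of Theorem~\ref{thm3} becomes condition $(ii)$ here, and the same remark converts the idempotent $q$ of $(iv)$ into a projection. In $(iii)$ and $(v)$ the relations $(fs)^{\ast}=fs$ and $(ft)^{\ast}=ft$ reduce to $s^{\ast}=s$ and $t^{\ast}=t$, i.e. $s$ and $t$ become Hermitian elements. The annihilator conditions $ap=0$, $as=0$, $aq=0$, $at=0$ and the units $u=a^{n}+p$, $v=a^{n}+s$, $w=(1-q)a^{n}+q$, $z=(1-t)a^{n}+t$ contain no occurrence of $f$ and are carried over unchanged, so each of the conditions $(ii)$ through $(v)$ transfers to the corresponding statement of the corollary.

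Finally I would check the representation formulas: substituting $f=1$ (hence $f^{-1}=1$) into the $n=1$ and $n\geqslant 2$ displays of Theorem~\ref{thm3} reproduces termwise the expressions for $a_{\co}$ stated here. Since every clause and every formula is the $f=1$ instance of the corresponding assertion of Theorem~\ref{thm3}, the chain of equivalences $(i)\Leftrightarrow(ii)\Leftrightarrow(iii)\Leftrightarrow(iv)\Leftrightarrow(v)$ and the displayed representations follow immediately. Because the result is a pure specialization there is no genuine obstacle; the only points that need care are the bookkeeping identification ``idempotent with $s^{\ast}=s$ equals projection'' used in $(ii)$ and $(iv)$, and the routine verification that the substitution $f^{-1}=1$ leaves the four formulas for $a_{\co}$ intact. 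A self-contained argument mirroring the proof scheme of Theorem~\ref{thm2} on the right-hand side is possible, but since Theorem~\ref{thm3} is already available it would merely duplicate work, so I would not pursue it.
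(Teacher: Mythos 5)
Your proposal is correct and is exactly the paper's approach: the paper derives this corollary by setting $f=1$ in Theorem~\ref{thm3} (introduced with ``When $f=1$, we get the following result''), just as you do, with the same identification that an idempotent (resp.\ element) satisfying $p^{\ast}=p$ is a projection (resp.\ Hermitian element) and the same termwise specialization of the representation formulas.
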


In the derivation of $(i)$ from $(ii)$ of Theorem~\ref{thm2},
when $n=1$,
$a^{\co}=u^{-1}(1-p)$.
From the equation (\ref{2.1}) (i.e., $1-p=au^{-1}$) we know that $e(1-p)=eau^{-1}$ is Hermitian,
thus
\begin{equation*}
\begin{split}
    a^{\co}
    &~=u^{-1}(1-p)=u^{-1}au^{-1}=u^{-1}e^{-1}eau^{-1}=u^{-1}e^{-1}(eau^{-1})^{\ast}\\
    &~=(u^{\ast}eu)^{-1}a^{\ast}e=(a^{\ast}ea+ep)^{-1}a^{\ast}e.
\end{split}
\end{equation*}
Through this expression,
we naturally want to know whether $a$ is $e$-core invertible or not when there is a unique idempotent $p$ such that $(ep)^{\ast}=ep$,
$pa=0$,
$a^{\ast}ea+ep\in R^{-1}$.
However,
it is not true.
Here is a counterexample.

\begin{example}\label{eg1}
Let $R$ be an infinite matrix ring over complex filed whose rows and columns are both finite,
let conjugate transpose be the involution and $a=\sum\limits_{i=1}^{\infty}e_{i+1,i}$.
Then $a^{\ast}a=1$,
$aa^{\ast}=\sum\limits_{i=2}^{\infty}e_{i,i}$.
Set $e=1$,
$p=0$,
then $p$ is an idempotent satisfying $(ep)^{\ast}=ep$,
$pa=0$ and $a^{\ast}ea+ep=1\in R^{-1}$.
But $a$ is not group invertible,
thus $a$ is not $e$-core invertible.
\end{example}

The above counterexample shows that even if there is a unique idempotent $p$ such that $(ep)^{\ast}=ep$,
$pa=0$ and $a^{\ast}ea+ep\in R^{-1}$,
$a$ is not necessary to be $e$-core invertible in general rings.
However,
it is true when we take $R$ as a Dedekind-finite ring which satisfies the property that $ab=1$ implies $ba=1$ for any $a, b\in R$.
Let $R^{-1}_r$ and $R^{-1}_l$ denote all right invertible and left invertible elements in $R$,
respectively.
See the following result.

\begin{theorem} \label{D-finite ring}
Let $R$ be a Dedekind-finite ring and $a\in R$,
$e\in R$ is an invertible Hermitian element.
Then the following statements are equivalent: \\
$(i)$ $a\in R^{e, \co}$; \\
$(ii)$ there exists a unique idempotent $p$ such that $(ep)^{\ast}=ep$, $pa=0$, $a^{\ast}ea+ep\in R^{-1}$;\\
$(iii)$ there exists a unique idempotent $p$ such that $(ep)^{\ast}=ep$, $pa=0$, $a^{\ast}ea+ep\in R^{-1}_r$; \\
$(iv)$ there exists a unique idempotent $p$ such that $(ep)^{\ast}=ep$, $pa=0$, $a^{\ast}ea+ep\in R^{-1}_l$.\\
In this case,
\begin{equation*}
\begin{split}
    a^{e, \co}=(a^{\ast}ea+ep)^{-1}a^{\ast}e.
\end{split}
\end{equation*}
\end{theorem}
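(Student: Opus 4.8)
The plan is to reduce everything to Theorem~\ref{thm2} with $n=1$ through the single algebraic identity
\[
a^{\ast}ea+ep=u^{\ast}eu,\qquad u:=a+p,
\]
and to invoke the Dedekind-finite hypothesis only to turn a one-sided inverse into a two-sided one. First I would dispose of $(ii)\Leftrightarrow(iii)\Leftrightarrow(iv)$ for free: in a Dedekind-finite ring $bc=1$ forces $cb=1$, so any one-sided inverse is two-sided, whence $R^{-1}=R^{-1}_{r}=R^{-1}_{l}$. Thus the three conditions imposed on $a^{\ast}ea+ep$ are literally the same condition and the three statements coincide verbatim. (This is exactly the feature that fails for the one-sided shift of Example~\ref{eg1}.) Next I would record the identity itself: from $(ep)^{\ast}=ep$ and $e^{\ast}=e$ one gets $p^{\ast}e=ep$, hence $p^{\ast}=epe^{-1}$, and combined with $pa=0$ this yields $a^{\ast}ep=(pa)^{\ast}e=0$, $p^{\ast}ea=epa=0$ and $p^{\ast}ep=ep$. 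Expanding $u^{\ast}eu=(a^{\ast}+p^{\ast})e(a+p)$, the two cross terms $a^{\ast}ep$ and $p^{\ast}ea$ vanish and $p^{\ast}ep=ep$, so $u^{\ast}eu=a^{\ast}ea+ep$, which is the engine of the whole proof.

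For $(i)\Rightarrow(ii)$ I would set $p=1-aa^{e,\co}$. Theorem~\ref{thm2} (direction $(i)\Rightarrow(ii)$, $n=1$) already gives that $p$ is an idempotent with $(ep)^{\ast}=ep$, $pa=0$ and $u=a+p\in R^{-1}$; since $e$ and $u$ (hence $u^{\ast}$) are invertible, the identity shows $a^{\ast}ea+ep=u^{\ast}eu\in R^{-1}$. This is precisely the computation carried out in the remark preceding the theorem, which simultaneously delivers the asserted formula $a^{e,\co}=(a^{\ast}ea+ep)^{-1}a^{\ast}e$.

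For $(ii)\Rightarrow(i)$ comes the key step. Given $p$ with $g:=a^{\ast}ea+ep\in R^{-1}$, put $u=a+p$ and use $g=u^{\ast}eu$. Then $\bigl(g^{-1}u^{\ast}e\bigr)u=g^{-1}g=1$, so $u$ is left invertible, and Dedekind-finiteness upgrades this to $u\in R^{-1}$ with $u^{-1}=g^{-1}u^{\ast}e$. Now $p$ is an element satisfying $(ep)^{\ast}=ep$, $pa=0$ and $a+p\in R^{-1}$, so Theorem~\ref{thm2} (direction $(iii)\Rightarrow(i)$, $n=1$) yields $a\in R^{e,\co}$; moreover its formula $a^{e,\co}=u^{-1}(1-p)$, together with $u^{\ast}e=a^{\ast}e+ep$ and the auxiliary relations, collapses to $g^{-1}a^{\ast}e=(a^{\ast}ea+ep)^{-1}a^{\ast}e$. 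Uniqueness of $p$ in $(ii)$ then follows: any idempotent $p_{1}$ satisfying $(ii)$ gives $a+p_{1}\in R^{-1}$ by the same left-invertibility argument, so $p_{1}$ meets the hypotheses of Theorem~\ref{thm2}$(ii)$ with $n=1$, whose idempotent is unique, forcing $p_{1}=p$.

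The one genuinely non-formal point, and the step I expect to be the main obstacle, is the passage from $u^{\ast}eu\in R^{-1}$ to $u\in R^{-1}$: the factorization only exhibits a left inverse of $u$, and Example~\ref{eg1} shows this cannot be improved in an arbitrary ring. Dedekind-finiteness is exactly the hypothesis that bridges this gap, after which every remaining claim is obtained by substituting $u=a+p$ into the already-proved Theorem~\ref{thm2}.
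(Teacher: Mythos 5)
Your proposal is correct and follows essentially the same route as the paper: the same key factorization $a^{\ast}ea+ep=(a^{\ast}e+ep)(a+p)=u^{\ast}eu$ with $u=a+p$, the same use of Dedekind-finiteness to upgrade the resulting left invertibility of $u$ to invertibility, and the same reduction of both directions and of the formula $a^{e,\co}=(a^{\ast}ea+ep)^{-1}a^{\ast}e$ to Theorem~\ref{thm2} with $n=1$. The only cosmetic difference is that the paper obtains $(ii)\Leftrightarrow(iii)\Leftrightarrow(iv)$ from the Hermitian-ness of $a^{\ast}ea+ep$ (valid in any ring with involution), whereas you invoke Dedekind-finiteness, which is equally valid under the theorem's hypothesis; you also spell out the uniqueness of $p$, which the paper leaves implicit.
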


\begin{proof}
Since $a^{\ast}ea+ep$ is Hermitian,
$a^{\ast}ea+ep$ is one-sided invertible if and only if it is invertible,
hence the statements $(ii)$, $(iii)$ and $(iv)$ are equivalent.
It is sufficient to show that $(i) \Leftrightarrow (ii)$.

$(i)\Rightarrow (ii).$ Suppose that $a\in R^{e, \co}$ and let $p=1-aa^{e, \co}$,
then $a+p\in R^{-1}$ by Theorem~\ref{thm2}.
Moreover,
$e(a+p)=ea+ep\in R^{-1}$,
so $(ea+ep)^{\ast}=a^{\ast}e+ep\in R^{-1}$.
Therefore $a^{\ast}ea+ep=(a^{\ast}e+ep)(a+p)$ is invertible.

$(ii)\Rightarrow (i).$ Let $u=a+p$,
$u^{\ast}eu=a^{\ast}ea+ep\in R^{-1}$.
As $R$ is a Dedekind-finite ring,
thus $u\in R^{-1}$,
which guarantees $a\in R^{e, \co}$ and $a^{e,\co}=(a^{\ast}ea+ep)^{-1}a^{\ast}e$ by Theorem~\ref{thm2}.
\end{proof}

There is a dual result for the $f$-dual core inverse.

\begin{theorem}
Let $R$ be a Dedekind-finite ring and $a\in R$,
$f\in R$ is an invertible Hermitian element.
Then the following statements are equivalent: \\
$(i)$ $a\in R^{e, \co}$; \\
$(ii)$ there exists a unique idempotent $q$ such that $(fq)^{\ast}=fq$, $aq=0$, $af^{-1}a^{\ast}+qf^{-1}\in R^{-1}$;\\
$(iii)$ there exists a unique idempotent $q$ such that $(fq)^{\ast}=fq$, $aq=0$, $af^{-1}a^{\ast}+eqf^{-1}\in R^{-1}_r$; \\
$(iv)$ there exists a unique idempotent $q$ such that $(fq)^{\ast}=fq$, $aq=0$, $af^{-1}a^{\ast}+qf^{-1}\in R^{-1}_l$.\\
In this case,
\begin{equation*}
\begin{split}
    a^{e, \co}=f^{-1}a^{\ast}(af^{-1}a^{\ast}+qf^{-1})^{-1}.
\end{split}
\end{equation*}
\end{theorem}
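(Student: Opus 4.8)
The plan is to mirror the proof of Theorem~\ref{D-finite ring}, replacing the $e$-core data of Theorem~\ref{thm2} by the $f$-dual core data of Theorem~\ref{thm3}, and the idempotent $p=1-aa^{e,\co}$ by its right-handed counterpart $q=1-a_{f,\co}a$. First I would observe that $(fq)^{\ast}=fq$ forces $q^{\ast}=fqf^{-1}$, whence both $af^{-1}a^{\ast}$ and $qf^{-1}$ are Hermitian (recall $f^{-1}$ is Hermitian). Consequently $af^{-1}a^{\ast}+qf^{-1}$ is Hermitian, so it is one-sided invertible precisely when it is invertible; this settles $(ii)\Leftrightarrow(iii)\Leftrightarrow(iv)$ and reduces the theorem to $(i)\Leftrightarrow(ii)$.

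The computational heart is the factorization $af^{-1}a^{\ast}+qf^{-1}=uf^{-1}u^{\ast}$ with $u=a+q$. Expanding $(a+q)f^{-1}(a+q)^{\ast}$, the two cross terms vanish and the $q$-term collapses: using $aq=0$ and $q^{\ast}=fqf^{-1}$ one gets $af^{-1}q^{\ast}=aqf^{-1}=0$, its adjoint $qf^{-1}a^{\ast}=0$, and $qf^{-1}q^{\ast}=q^{2}f^{-1}=qf^{-1}$. I would record this identity at the outset, since both implications hinge on it.

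For $(i)\Rightarrow(ii)$, set $q=1-a_{f,\co}a$; the defining equations of the $f$-dual core inverse give that $q$ is an idempotent with $(fq)^{\ast}=fq$ and $aq=0$, and Theorem~\ref{thm3} yields $u=a+q\in R^{-1}$. Since $f\in R^{-1}$, also $uf^{-1}\in R^{-1}$ and hence $(uf^{-1})^{\ast}=f^{-1}u^{\ast}\in R^{-1}$, so the factorization exhibits $af^{-1}a^{\ast}+qf^{-1}=uf^{-1}u^{\ast}$ as a product of units, hence invertible. For $(ii)\Rightarrow(i)$, the factorization shows $uf^{-1}u^{\ast}\in R^{-1}$, so the identity $u\cdot\bigl[f^{-1}u^{\ast}(uf^{-1}u^{\ast})^{-1}\bigr]=1$ makes $u$ right invertible; here the Dedekind-finite hypothesis is essential and upgrades this to $u\in R^{-1}$, after which Theorem~\ref{thm3} delivers $a\in R_{f,\co}$. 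This Dedekind-finite step is the only non-formal point — Example~\ref{eg1} shows that invertibility of the Hermitian expression alone need not force core invertibility — so I expect it to be where the hypothesis must be invoked with care.

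Finally, for the representation I would start from $a_{f,\co}=(1-q)u^{-1}$ (Theorem~\ref{thm3} with $n=1$) and use $u(1-q)=a$, i.e. $1-q=u^{-1}a$, to rewrite $a_{f,\co}=u^{-1}au^{-1}$. Since $f(1-q)=fu^{-1}a$ is Hermitian, we have $fu^{-1}a=(fu^{-1}a)^{\ast}=a^{\ast}(u^{\ast})^{-1}f$; substituting gives $a_{f,\co}=f^{-1}(fu^{-1}a)u^{-1}=f^{-1}a^{\ast}(u^{\ast})^{-1}fu^{-1}=f^{-1}a^{\ast}(uf^{-1}u^{\ast})^{-1}=f^{-1}a^{\ast}(af^{-1}a^{\ast}+qf^{-1})^{-1}$, as claimed.
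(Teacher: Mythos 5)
Your proposal is correct and is essentially the paper's own argument: the paper omits the proof precisely because it is the dualization of Theorem~\ref{D-finite ring}, and your factorization $af^{-1}a^{\ast}+qf^{-1}=uf^{-1}u^{\ast}$ with $u=a+q=a+1-a_{f,\co}a$, the Dedekind-finite upgrade from one-sided to two-sided invertibility, and the derivation of $a_{f,\co}=f^{-1}a^{\ast}(af^{-1}a^{\ast}+qf^{-1})^{-1}$ are exactly the right-handed counterparts of the paper's steps for the $e$-core case. You also correctly (if silently) repaired the statement's typos, reading $R_{f,\co}$ and $a_{f,\co}$ in place of $R^{e,\co}$ and $a^{e,\co}$, and $qf^{-1}$ in place of the stray $eqf^{-1}$ in $(iii)$.
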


Let $a\in R$ and $e, f\in R$ be two invertible Hermitian elements.
Zhu and Wang \cite{HHZ} mentioned  that $a\in R^{e, \co}\cap R_{f, \co}$ if and only if $a\in R^{\#}\cap R^{\dagger}_{e, f}$.
Mosi\'{c} et al. \cite{Mosic2} mentioned that $a\in R$ is weighted-$\mathrm{EP}$ with respect to $(e, f)$ if and only if $a\in R^{e, \co}\cap R_{f, \co}$ and $a^{e, \co}=a_{f, \co}$.
By Theorem~\ref{thm2} and Theorem~\ref{thm3},
we obtain the following theorem.

\begin{theorem} \label{EP}
Let $a\in R$ and $e, f\in R$ be two invertible Hermitian elements,
$n\geqslant 1$ is a positive integer.
The following conditions are equivalent: \\
$(i)$ $a\in R$ is weighted-$\mathrm{EP}$ with respect to $(e, f)$; \\
$(ii)$ there exists a unique idempotent $p$ such that $(ep)^{\ast}=ep$, $(fp)^{\ast}=fp$, $ap=pa=0$, $a^{n}+p\in R^{-1}$;\\
$(iii)$ there exists an element $s$ such that $(es)^{\ast}=es$, $(fs)^{\ast}=fs$, $as=sa=0$, $a^{n}+s\in R^{-1}$.\\
\end{theorem}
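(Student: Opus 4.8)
The plan is to prove the cycle $(i)\Rightarrow(ii)\Rightarrow(iii)\Rightarrow(i)$, exploiting the characterization recalled immediately before the statement: $a$ is weighted-$\mathrm{EP}$ with respect to $(e,f)$ if and only if $a\in R^{e, \co}\cap R_{f, \co}$ and $a^{e, \co}=a_{f, \co}$. The whole argument then reduces to feeding a \emph{single} element $p$ (or $s$) simultaneously into Theorem~\ref{thm2} (for the $e$-core inverse) and Theorem~\ref{thm3} (for the $f$-dual core inverse), and then checking that the two inverses these theorems produce actually coincide.

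For $(i)\Rightarrow(ii)$ I would set $x=a^{e, \co}=a_{f, \co}$. Applying the construction in $(i)\Rightarrow(ii)$ of Theorem~\ref{thm2} to the $e$-core inverse yields the idempotent $1-ax$, which satisfies $\bigl(e(1-ax)\bigr)^{\ast}=e(1-ax)$, $(1-ax)a=0$ and $a^{n}+(1-ax)\in R^{-1}$; applying the dual construction of Theorem~\ref{thm3} to the $f$-dual core inverse yields the idempotent $1-xa$, which satisfies $\bigl(f(1-xa)\bigr)^{\ast}=f(1-xa)$, $a(1-xa)=0$ and $a^{n}+(1-xa)\in R^{-1}$. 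The decisive point is that these two idempotents agree, i.e. $ax=xa$. This follows from Lemma~\ref{e-core-inverse}: as an $e$-core inverse $x=ax^{2}$, and as an $f$-dual core inverse $x=x^{2}a$, hence, using $x=x^{2}a$ and then $x=ax^{2}$, one has $ax=a(x^{2}a)=ax^{2}a=(ax^{2})a=xa$. Setting $p=1-ax=1-xa$, all four requirements of $(ii)$ hold at once; uniqueness is inherited, because any idempotent satisfying $(ii)$ in particular satisfies the hypotheses of $(ii)$ in Theorem~\ref{thm2}, where such an idempotent is already unique.

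The implication $(ii)\Rightarrow(iii)$ is trivial, an idempotent being in particular an element. For $(iii)\Rightarrow(i)$, the conditions $(es)^{\ast}=es$, $sa=0$, $a^{n}+s\in R^{-1}$ place us in case $(iii)$ of Theorem~\ref{thm2}, so $a\in R^{e, \co}$, while $(fs)^{\ast}=fs$, $as=0$, $a^{n}+s\in R^{-1}$ place us in the corresponding case of Theorem~\ref{thm3}, so $a\in R_{f, \co}$. It then remains to verify $a^{e, \co}=a_{f, \co}$. When $n=1$, both theorems return the same value $v^{-1}av^{-1}$ with $v=a+s$, so there is nothing to check. When $n\geqslant 2$, Theorem~\ref{thm2} gives $a^{e, \co}=a^{n-1}v^{-1}$ and Theorem~\ref{thm3} gives $a_{f, \co}=v^{-1}a^{n-1}$ with $v=a^{n}+s$; since $as=sa=0$ force $a^{n-1}s=sa^{n-1}=0$, the element $a^{n-1}$ commutes with $v=a^{n}+s$ (both products equal $a^{2n-1}$) and hence with $v^{-1}$, giving $a^{e, \co}=a_{f, \co}$. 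By the recalled characterization, $a$ is weighted-$\mathrm{EP}$ with respect to $(e,f)$.

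The only genuine obstacle is the coincidence of the two pieces of data produced by Theorems~\ref{thm2} and~\ref{thm3}: in $(i)\Rightarrow(ii)$ this is the commutation $ax=xa$, and in $(iii)\Rightarrow(i)$ it is the commutation of $a^{n-1}$ with $v$. Both collapse to the outer-inverse identities $x=ax^{2}$ and $x=x^{2}a$ of Lemma~\ref{e-core-inverse}, respectively to the annihilating conditions $as=sa=0$; everything else is a direct transcription of the already-established theorems.
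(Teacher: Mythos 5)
Your proof is correct, and for $(iii)\Rightarrow(i)$ it is essentially the paper's own argument: feed the single element $s$ into Theorems~\ref{thm2} and~\ref{thm3}, then check $a^{e,\co}=a_{f,\co}$ — trivially for $n=1$, and for $n\geqslant 2$ via commutation with $v=a^{n}+s$ (the paper commutes $a$ with $v$, you commute $a^{n-1}$ with $v$; same computation). The genuine difference is in $(i)\Rightarrow(ii)$. The paper works directly from the definition of weighted-$\mathrm{EP}$: it sets $p=1-a^{\#}a=1-a^{\dagger}_{e,f}a$, reads off $(ep)^{\ast}=ep$, $(fp)^{\ast}=fp$, $ap=pa=0$ from the defining equations of $a^{\dagger}_{e,f}$ and $a^{\#}$, and exhibits the explicit inverse $(a^{\#})^{n}+1-a^{\#}a$ of $a^{n}+p$. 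You instead start from the recalled characterization of Mosi\'{c} et al., set $x=a^{e,\co}=a_{f,\co}$, derive the key commutation $ax=xa$ from the identities $ax^{2}=x$ and $x^{2}a=x$ of Lemma~\ref{e-core-inverse}, take $p=1-ax=1-xa$, and import both the invertibility of $a^{n}+p$ and the uniqueness of $p$ from the internal constructions of Theorems~\ref{thm2} and~\ref{thm3}. The two routes build the same idempotent (your $x$ satisfies $axa=a$, $xax=x$, $ax=xa$, hence $x=a^{\#}$), so the difference is one of routing rather than substance. Your version is more modular and has one concrete advantage: it explicitly settles the uniqueness asserted in $(ii)$, a point the paper's proof of $(i)\Rightarrow(ii)$ silently omits (it only verifies existence). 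The paper's version, in exchange, is self-contained — it never re-enters the proofs of Theorems~\ref{thm2} and~\ref{thm3} — and produces a closed-form inverse of $a^{n}+p$.
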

\begin{proof}
$(i)\Rightarrow (ii).$ Suppose $a$ is weighted-$\mathrm{EP}$ with respect to $(e, f)$,
so $a\in R^{\dagger}_{e, f}\cap R^{\#}$ and $a^{\dagger}_{e, f}=a^{\#}$.
Let $p=1-a^{\#}a=1-a^{\dagger}_{e, f}a$,
it is easily seen that $p$ is an idempotent satisfying $(ep)^{\ast}=ep$, $(fp)^{\ast}=fp$, $pa=ap=0$.
Since
\begin{equation*}
\begin{split}
    (a^{n}+1-a^{\#}a)((a^{\#})^{n}+1-a^{\#}a)=1=((a^{\#})^{n}+1-a^{\#}a)(a^{n}+1-a^{\#}a),
\end{split}
\end{equation*}
thus $a^{n}+1-a^{\#}a$ is invertible.

$(ii)\Rightarrow (iii)$ is trivial.

$(iii)\Rightarrow (i).$ According to Theorem~\ref{thm2} and Theorem~\ref{thm3},
$a\in R^{e, \co}\cap R_{f, \co}$.
It remains to show that $a^{e, \co}=a_{f, \co}$.
Write $v=a^{n}+s$.
When $n=1$,
we have $a^{e, \co}=v^{-1}av^{-1}=a_{f, \co}$.
When $n\geqslant 2$,
$a^{e, \co}=a^{n-1}v^{-1}, a_{f, \co}=v^{-1}a^{n-1}$.
From $pa=ap=0$,
we obtain $av=va$,
thus $av^{-1}=v^{-1}a$.
Therefore,
$a^{e, \co}=a^{n-1}u^{-1}=u^{-1}a^{n-1}=a_{f, \co}$.
As a consequence,
$a$ is weighted-$\mathrm{EP}$ with respect to $(e, f)$.
\end{proof}

 \bigskip
%%%%%%%%%%%%%%%%%%%%%%%%%%%%%%%%%%%%%%%%%%%%%%%%%%%%%%%%%%%%%%%%%%%%%%%%%%%%%%%%%%%%%%%%%%%%%%%%%%%%%%%%%%%%%%%%%%%%%%%%%%%%%%%%%%%%%%%%%%%%%%%%%%%%%%%%%%%%%%%%%%%%%%%%%%%%%%%%%%%%
%%%%%%%%%%%%%%%%%%%%%%%%%%%%%%%%%%%%%%%%%%%%%%%%%%%%%%%%%%%%%%%%%%%%%%%%%%%%%%%%%%%%%%%%%%%%%%%%%%%%%%%%%%%%%%%%%%%%%%%%%%%%%%%%%%%%%%%%%%%%%%%%%%%%%%%%%%%%%%%%%%%%%%%%%%%%%%%%%%%%
%%%%%%%%%%%%%%%%%%%%%%%%%%%%%%%%%%%%%%%%%%%%%%%%%%%%%%%%%%%%%%%%%%%%%%%%%%%%%%%%%%%%%%%%%%%%%%%%%%%%%%%%%%%%%%%%%%%%%%%%%%%%%%%%%%%%%%%%%%%%%%%%%%%%%%%%%%%%%%%%%%%%%%%%%%%%%%%%%%%%
%%%%%%%%%%%%%%%%%%%%%%%%%%%%%%%%%%%%%%%%%%%%%%%%%%%%%%%%%%%%%%%%%%%%%%%%%%%%%%%%%%%%%%%%%%%%%%%%%%%%%%%%%%%%%%%%%%%%%%%%%%%%%%%%%%%%%%%%%%%%%%%%%%%%%%%%%%%%%%%%%%%%%%%%%%%%%%%%%%%

\centerline {\bf ACKNOWLEDGMENTS}
This research is supported by the National Natural Science Foundation of China (No.11871145);
NSF of Jiangsu Province (BK20200944),
Natural Science Foundation of Jiangsu Higher Education Institutions of China (20KJB110001);
the QingLan Project of Jiangsu Province.

\end{document}